\newtheorem{theorem}{Theorem}[section]
\newtheorem{lemma}[theorem]{Lemma}
\newtheorem{corollary}[theorem]{Corollary}
\newtheorem{definition}[theorem]{Definition}
\newtheorem{remark}[theorem]{Remark}
\numberwithin{equation}{section}
\def\Rn{{\mathbb{R}^n}}
\def\a{\alpha}
\def\b{\beta}
\def\i{\infty}
\def\L1loc{L_{\Phi}^{\rm loc}(\Rn)}
\def\dual{\,^{^{\complement}}\!}
\begin{document}

\begin{center}
\Large Fractional maximal function and its commutators on Orlicz spaces
\end{center}

\

\centerline{\large Vagif S. Guliyev$^{a,b,c}$, Fatih Deringoz$^{a,}$\footnote{
Corresponding author.
\\
The research of V.S. Guliyev  was partially supported by the grant of Presidium of Azerbaijan National Academy of Science 2015 and
by the Ministry of Education and Science of the Russian Federation (the Agreement number No. 02.a03.21.0008).
\\
E-mail adresses: deringoz@hotmail.com (F. Deringoz), vagif@guliyev.com (V.S. Guliyev), sabhasanov@gmail.com (S.G. Hasanov).}, Sabir G. Hasanov$^{d}$ }

\

\centerline{$^{a}$\it Department of Mathematics, Ahi Evran University, 40100 Kirsehir, Turkey}

\centerline{$^{b}$ \it S.M. Nikolskii Institute of Mathematics at RUDN University, 117198 Moscow, Russia}

\centerline{$^{c}$\it Institute of Mathematics and Mechanics of NAS of Azerbaijan, AZ1141 Baku, Azerbaijan}

\centerline{$^{d}$\it Ganja State University, Ganja, Azerbaijan}

\

\begin{abstract}
In this paper, we find necessary and sufficient conditions for the boundedness of fractional maximal operator $M_{\a}$ on Orlicz spaces. As an application of this results we consider the boundedness of fractional maximal commutator $M_{b,\a}$ and nonlinear commutator of fractional maximal operator $[b,M_{\a}]$ on Orlicz spaces, when $b$ belongs to the Lipschitz space, by which some new characterizations of the Lipschitz spaces are given.
\end{abstract}

\

\noindent{\bf AMS Mathematics Subject Classification:} $~~$ 42B25, 46E30, 47B47, 26A16

\noindent{\bf Key words:} {fractional maximal function; commutator; Lipschitz space; Orlicz space}

\

\section{Introduction}

Norm inequalities for several classical operators of harmonic analysis have been widely studied in the context of Orlicz spaces. It is well known that many of such operators fail to have continuity properties when they act between certain Lebesgue spaces and, in some situations, the Orlicz spaces appear as adequate substitutes. For example,
the Hardy-Littlewood maximal operator is bounded on $L^p$ for $1 < p < \infty $, but not on $L^1$,  but using Orlicz spaces, we can investigate the boundedness of the maximal operator near $p = 1$, see \cite{Kita1,Cianchi1999,GaRa,GaRaSawTan} for  more precise statements.

Let $T$ be the classical singular integral operator, the commutator $[b, T]$ generated by $T$ and a suitable function $b$ is given by
\begin{equation}\label{DefcomSIO}
  [b, T] f=b T(f)-T(bf).
\end{equation}

A well known result due to Coifman, Rochberg and Weiss \cite{CRW} (see e.g. \cite{S.Janson}) states that
$b \in BMO(\Rn)$ if and only if the commutator $[b, T]$ is bounded on $L^p(\Rn)$ for $1 < p < \infty$. In 1978,
Janson \cite{S.Janson} gave some characterizations of the Lipschitz space $\dot{\Lambda}_{\beta}(\Rn)$ (see Definition \ref{deflip} below)
via commutator $[b, T]$ and proved that $b \in \dot{\Lambda}_{\beta}(\Rn)(0 < \beta < 1)$ if and only if $[b, T]$ is bounded
from $L^p(\Rn)$ to $L^q(\Rn)$ where $1 < p < n/\beta$ and $1/p - 1/q = \beta/n$ (see also Paluszy\'{n}ski \cite{Palu}).

Let $0<\a<n$. The fractional maximal operator $M_{\a}$ is given by
\begin{equation*}
M_{\a} f(x)=\sup_{B\ni x}|B|^{-1+ \frac{\a}{n}}\int _{B} |f(y)|dy
\end{equation*}
and the fractional maximal commutator of $M_{\a}$ with a locally integrable function $b$ is defined by
$$
M_{b,\a}f(x)=\sup_{B\ni x}|B|^{-1+ \frac{\a}{n}}\int _{B}|b(x)-b(y)||f(y)|dy,
$$
where the supremum is taken over all balls $B \subset \Rn$ containing $x$. If $\a=0$, then $M \equiv M_{0}$ is the Hardy-Littlewood maximal operator and $M_b \equiv M_{b,0}$ is the maximal commutator of $M$.

On the other hand, similar to \eqref{DefcomSIO}, we can define the (nonlinear) commutator of the fractional maximal operator $M_{\a}$ with a locally integrable function $b$ by
$$
[b, M_{\a}](f)(x) f=b(x) M_{\a}(f)(x)-M_{\a}(bf)(x).
$$
For more details about the operators $M_{b,\a}$ and $[b,M_{\a}]$, where $0\le\a<n$, we refer to \cite{AgcGogKMus,ZhLWu} and references therein.

Our main aim is to characterize the functions involved in the boundedness on Orlicz spaces of the fractional maximal operator $M_{\a}$. Actually, such a characterization was done in \cite[Theorem 1]{Cianchi1999}. But our technique of the proof and characterization different from the ones in \cite{Cianchi1999}. As an application of this result
we consider the boundedness of $M_{b,\a}$ and $[b,M_{\a}]$ on Orlicz spaces when $b$ belongs to the Lipschitz space, by which some new characterizations of the Lipschitz spaces are given.

Throughout the whole paper, the notation $A \lesssim B$ means that there exists a constant
$C > 0$ such that $A \le C B$, where $C$ is independent of appropriate quantities. If $C_1 B \le A \le C_2 B$ for some positive constants $C_1$ and $C_2$, we shall write $A\approx B$.

\

\section{Preliminaries}
Before we proceed with the proofs of the main results, we shall introduce some preliminary deﬁnitions and properties concerning Orlicz spaces.

\begin{definition}\label{def2} A function $\Phi : [0,\infty) \rightarrow [0,\infty]$ is called a Young function if $\Phi$ is convex, left-continuous, $\lim\limits_{r\rightarrow +0} \Phi(r) = \Phi(0) = 0$ and $\lim\limits_{r\rightarrow \infty} \Phi(r) = \infty$.
\end{definition}
From the convexity and $\Phi(0) = 0$ it follows that any Young function is increasing.
The set of  Young  functions such that
\begin{equation*}
0<\Phi(r)<\infty \qquad \text{for} \qquad 0<r<\infty
\end{equation*}
will be denoted by  $\mathcal{Y}.$
If $\Phi \in  \mathcal{Y}$, then $\Phi$ is absolutely continuous on every closed interval in $[0,\infty )$
and bijective from $[0,\infty )$ to itself.

For a Young function $\Phi$ and  $0 \leq s \leq \infty $, let
$$\Phi^{-1}(s)=\inf\{r\geq 0: \Phi(r)>s\}.$$
If $\Phi \in  \mathcal{Y}$, then $\Phi^{-1}$ is the usual inverse function of $\Phi$.
It is well known that
\begin{equation}\label{2.3}
r\leq \Phi^{-1}(r)\widetilde{\Phi}^{-1}(r)\leq 2r, \qquad  r\geq 0,
\end{equation}
where $\widetilde{\Phi}(r)$ is defined by
\begin{equation*}
\widetilde{\Phi}(r)=\left\{
\begin{array}{ccc}
\sup\{rs-\Phi(s): s\in  [0,\infty )\}
& , & r\in  [0,\infty ) \\
\infty &,& r=\infty .
\end{array}
\right.
\end{equation*}

A Young function $\Phi$ is said to satisfy the
 $\Delta_2$-condition, denoted also as   $\Phi \in  \Delta_2$, if
$$
\Phi(2r)\le C\Phi(r), \qquad r\geq 0
$$
for some $C\geq 2$. If $\Phi \in  \Delta_2$, then $\Phi \in  \mathcal{Y}$. A Young function $\Phi$ is said to satisfy the $\nabla_2$-condition, denoted also by  $\Phi \in  \nabla_2$, if
$$
\Phi(r)\leq \frac{1}{2C}\Phi(Cr),\qquad r\geq 0
$$
for some $C>1$. We can verify the following examples: The function $\Phi(r) = r$ satisfies the $\Delta_2$-condition but does not satisfy the $\nabla_2$-condition.
If $1 < p < \infty$, then $\Phi(r) = r^p$ satisfies both the conditions. The function $\Phi(r) = e^r - r - 1$ satisfies the
$\nabla_2$-condition but does not satisfy the $\Delta_2$-condition.

\begin{definition} (Orlicz Space). For a Young function $\Phi$, the set
$$L^{\Phi}(\Rn)=\left\{f\in  L^1_{\rm loc}(\Rn): \int _{\Rn}\Phi(k|f(x)|)dx<\infty
 \text{ for some $k>0$  }\right\}$$
is called Orlicz space. If $\Phi(r)=r^{p},\, 1\le p<\infty $, then $L^{\Phi}(\Rn)=L^{p}(\Rn)$. If $\Phi(r)=0,\,(0\le r\le 1)$ and $\Phi(r)=\infty ,\,(r> 1)$, then $L^{\Phi}(\Rn)=L^\infty (\Rn)$. The  space $L^{\Phi}_{\rm loc}(\Rn)$ is defined as the set of all functions $f$ such that  $f\chi_{_B}\in  L^{\Phi}(\Rn)$ for all balls $B \subset \Rn$.
\end{definition}
$L^{\Phi}(\Rn)$ is a Banach space with respect to the norm
$$\|f\|_{L^{\Phi}}=\inf\left\{\lambda>0:\int _{\Rn}\Phi\Big(\frac{|f(x)|}{\lambda}\Big)dx\leq 1\right\}.$$

For a measurable set $\Omega\subset \mathbb{R}^{n}$, a measurable function $f$ and $t>0$, let
$
m(\Omega,\ f,\ t)=|\{x\in\Omega:|f(x)|>t\}|.
$
In the case $\Omega=\mathbb{R}^{n}$, we shortly denote it by $m(f,\ t)$.
\begin{definition} The weak Orlicz space
$$
WL^{\Phi}(\mathbb{R}^{n})=\{f\in L^{1}_{\rm loc}(\mathbb{R}^{n}):\Vert f\Vert_{WL^{\Phi}}<\infty\}
$$
is defined by the norm
$$
\Vert f\Vert_{WL^{\Phi}}=\inf\Big\{\lambda>0\ :\ \sup_{t>0}\Phi(t)m\Big(\frac{f}{\lambda},\ t\Big)\ \leq 1\Big\}.
$$
\end{definition}
We note that $\Vert f\Vert_{WL^{\Phi}}\leq \Vert f\Vert_{L^{\Phi}}$,
$$
\sup_{t>0}\Phi(t)m(\Omega,\ f,\ t)=\sup_{t>0}t\,m(\Omega,\ f,\ \Phi^{-1}(t))= \sup_{t>0}t\,m(\Omega,\ \Phi(|f|),\ t)
$$
and
\begin{equation}\label{orlpr}
\int _{\Omega}\Phi\Big(\frac{|f(x)|}{\|f\|_{L^{\Phi}(\Omega)}}\Big)dx\leq 1,\qquad \sup_{t>0}\Phi(t)m\Big(\Omega,\ \frac{f}{\|f\|_{WL^{\Phi}(\Omega)}},\ t\Big)\leq 1,
\end{equation}
where $\|f\|_{L^{\Phi}(\Omega)}=\|f\chi_{_\Omega}\|_{L^{\Phi}}$ and $\|f\|_{WL^{\Phi}(\Omega)}=\|f\chi_{_\Omega}\|_{WL^{\Phi}}$.

The following analogue of the H\"older's inequality is well known (see, for example, \cite{RaoRen}).
\begin{theorem}\label{HolderOr}
Let $\Omega\subset\Rn$ be a measurable set and functions $f$ and $g$ measurable on $\Omega$. For a Young function $\Phi$ and its complementary function  $\widetilde{\Phi}$,
the following inequality is valid
$$\int_{\Omega}|f(x)g(x)|dx \leq 2 \|f\|_{L^{\Phi}(\Omega)} \|g\|_{L^{\widetilde{\Phi}}(\Omega)}.$$
\end{theorem}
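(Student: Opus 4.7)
The plan is to reduce the inequality to Young's pointwise inequality for complementary pairs, then integrate using the normalization already encoded in (\ref{orlpr}). The backbone of the argument is the pointwise estimate
$$uv \le \Phi(u) + \widetilde{\Phi}(v), \qquad u,v \ge 0,$$
which is an immediate consequence of the definition of $\widetilde{\Phi}$ as the Legendre/Fenchel conjugate of $\Phi$: for any fixed $u\ge 0$, the supremum defining $\widetilde{\Phi}(v)$ already dominates $uv-\Phi(u)$. This is the one non-routine input; every other step is normalization or monotone integration.

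First I would dispose of the trivial cases. If either $\|f\|_{L^{\Phi}(\Omega)}=0$ or $\|g\|_{L^{\widetilde{\Phi}}(\Omega)}=0$, the corresponding function vanishes a.e.\ on $\Omega$ and the inequality holds trivially; if either norm is $\infty$ it also holds trivially. So I may assume both norms are finite and strictly positive, and set
$$F(x)=\frac{|f(x)|}{\|f\|_{L^{\Phi}(\Omega)}}, \qquad G(x)=\frac{|g(x)|}{\|g\|_{L^{\widetilde{\Phi}}(\Omega)}}.$$
By the first inequality in (\ref{orlpr}) applied to $f$ and, analogously, to $g$ with $\widetilde{\Phi}$ in place of $\Phi$, one has $\int_{\Omega}\Phi(F)\,dx\le 1$ and $\int_{\Omega}\widetilde{\Phi}(G)\,dx\le 1$.

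Next I would apply Young's inequality pointwise to $F(x)$ and $G(x)$ and integrate:
$$\int_{\Omega}F(x)G(x)\,dx \le \int_{\Omega}\Phi(F(x))\,dx+\int_{\Omega}\widetilde{\Phi}(G(x))\,dx \le 1+1 = 2.$$
Multiplying through by the product of the two Orlicz norms yields the claimed inequality.

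The main (mild) obstacle is really only verifying Young's inequality itself, since one must be a touch careful when $\Phi$ is not in $\mathcal{Y}$ (e.g.\ takes the value $\infty$, or is not strictly increasing) so that $\widetilde{\Phi}$ may also be degenerate; but the $\sup$-definition of $\widetilde{\Phi}$ still gives the bound, with the usual convention $0\cdot\infty=0$. Apart from this, the argument is a direct normalization-and-integrate scheme and requires no further machinery beyond what has already been introduced in the preliminaries.
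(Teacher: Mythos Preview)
Your proof is correct and is precisely the standard argument: normalize by the Luxemburg norms, apply Young's pointwise inequality $uv\le\Phi(u)+\widetilde{\Phi}(v)$, integrate, and use \eqref{orlpr}. The paper does not give its own proof of this theorem but simply cites it as well known from \cite{RaoRen}, where the same argument appears; so there is nothing to compare against.
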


By elementary calculations we have the following property.
\begin{lemma}\label{charorlc}
Let $\Phi$ be a Young function and $B$ be a set in $\mathbb{R}^n$ with finite Lebesgue measure. Then
\begin{equation*}
\|\chi_{_B}\|_{L^{\Phi}} = \|\chi_{_B}\|_{WL^{\Phi}}=\frac{1}{\Phi^{-1}\left(|B|^{-1}\right)}.
\end{equation*}
\end{lemma}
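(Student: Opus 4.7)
The plan is to evaluate both norms directly from their definitions by exploiting the simple structure of an indicator function, and then reduce each calculation to the same inequality involving $\Phi^{-1}$.

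For the Orlicz norm, I would first observe that $\chi_B(x)/\lambda$ takes the value $1/\lambda$ on $B$ and vanishes off $B$, so
$$\int_{\Rn}\Phi\!\left(\frac{\chi_B(x)}{\lambda}\right)dx = |B|\,\Phi(1/\lambda).$$
Hence the Luxemburg condition $\int_{\Rn}\Phi(\chi_B/\lambda)\,dx\leq 1$ is equivalent to $\Phi(1/\lambda)\leq 1/|B|$. The crucial bridge is the identity
$$\Phi(r)\leq s \iff r\leq \Phi^{-1}(s),$$
which follows from the definition $\Phi^{-1}(s)=\inf\{r\geq 0:\Phi(r)>s\}$ together with the monotonicity and left-continuity of $\Phi$. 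Applying this with $s=1/|B|$ gives $1/\lambda\leq \Phi^{-1}(1/|B|)$, and taking the infimum over admissible $\lambda$ yields $\|\chi_B\|_{L^\Phi}=1/\Phi^{-1}(|B|^{-1})$.

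For the weak Orlicz norm I would compute the distribution function of $\chi_B/\lambda$: it equals $|B|$ for $0<t<1/\lambda$ and $0$ for $t\geq 1/\lambda$. Therefore
$$\sup_{t>0}\Phi(t)\,m\!\left(\tfrac{\chi_B}{\lambda},t\right)=|B|\sup_{0<t<1/\lambda}\Phi(t)=|B|\,\Phi(1/\lambda),$$
where the last equality uses the left-continuity of $\Phi$ to identify the supremum on the open interval $(0,1/\lambda)$ with $\Phi(1/\lambda)$. This reduces the infimum defining $\|\chi_B\|_{WL^\Phi}$ to exactly the same inequality $\Phi(1/\lambda)\leq 1/|B|$ analyzed above, so both norms coincide and equal $1/\Phi^{-1}(|B|^{-1})$.

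The main obstacle is a subtlety rather than a difficulty: one must handle carefully that $\Phi$ is only required to be left-continuous and nondecreasing (not strictly increasing or continuous), so $\Phi^{-1}$ is the generalized inverse. Both the equivalence $\Phi(r)\leq s\Leftrightarrow r\leq\Phi^{-1}(s)$ and the replacement of $\sup_{0<t<1/\lambda}\Phi(t)$ by $\Phi(1/\lambda)$ rely on left-continuity; without it the weak-norm computation could differ from the Orlicz-norm computation by a jump. Once these points are made explicit, the rest is a direct substitution.
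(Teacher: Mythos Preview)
Your argument is correct and is precisely the ``elementary calculation'' the paper alludes to without writing out: the paper states the lemma with no proof beyond the remark that it follows by elementary calculations, and your direct evaluation of both Luxemburg-type infima via $\int\Phi(\chi_B/\lambda)=|B|\Phi(1/\lambda)$ and the equivalence $\Phi(r)\le s\Leftrightarrow r\le\Phi^{-1}(s)$ is exactly that calculation. Your explicit attention to left-continuity (needed both for $\Phi(\Phi^{-1}(s))\le s$ and for $\sup_{0<t<1/\lambda}\Phi(t)=\Phi(1/\lambda)$) is a point the paper leaves implicit but which is indeed required for the result to hold for a general Young function.
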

By Theorem \ref{HolderOr}, Lemma \ref{charorlc} and \eqref{2.3} we get the following estimate.
\begin{lemma}\label{lemHold} 
For a Young function $\Phi$ and $B=B(x,r)$, the following inequality is valid:
$$\int_{B}|f(y)|dy \leq 2 |B| \Phi^{-1}\left(|B|^{-1}\right) \|f\|_{L^{\Phi}(B)}.$$
\end{lemma}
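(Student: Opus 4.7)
The plan is to combine the three preceding results (Theorem \ref{HolderOr}, Lemma \ref{charorlc}, and the inequality \eqref{2.3}) in the obvious order. Everything needed has just been assembled on the previous page, so the proof is a short chain of substitutions rather than a genuinely new argument.

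First, I would write $\int_B |f(y)|\,dy = \int_{\Rn} |f(y)|\chi_{_B}(y)\,dy$ and apply the H\"older-type inequality in Orlicz spaces (Theorem \ref{HolderOr}) with the pair $(\Phi,\widetilde{\Phi})$, yielding
\[
\int_B |f(y)|\,dy \;\le\; 2\,\|f\|_{L^{\Phi}(B)}\,\|\chi_{_B}\|_{L^{\widetilde{\Phi}}(B)}.
\]
Next I would evaluate the norm of the characteristic function through Lemma \ref{charorlc} applied to the complementary Young function $\widetilde{\Phi}$, obtaining $\|\chi_{_B}\|_{L^{\widetilde{\Phi}}(B)} = 1/\widetilde{\Phi}^{-1}(|B|^{-1})$.

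Finally I would invoke the two-sided relation \eqref{2.3} with $r=|B|^{-1}$, which gives $\widetilde{\Phi}^{-1}(|B|^{-1}) \ge |B|^{-1}/\Phi^{-1}(|B|^{-1})$, so that
\[
\frac{1}{\widetilde{\Phi}^{-1}(|B|^{-1})} \;\le\; |B|\,\Phi^{-1}(|B|^{-1}).
\]
Plugging this bound into the H\"older estimate above produces exactly the inequality claimed.

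There is no real obstacle here; the only point that requires a moment's care is remembering to use \eqref{2.3} in the direction $r \le \Phi^{-1}(r)\widetilde{\Phi}^{-1}(r)$ (not the upper bound), which is what converts $\widetilde{\Phi}^{-1}$ into $\Phi^{-1}$ with the correct constant and reciprocal of $|B|$. The factor $2$ in the statement comes directly from the $2$ in Theorem \ref{HolderOr}; the lower bound $r$ in \eqref{2.3} is tight enough that no additional constant is picked up.
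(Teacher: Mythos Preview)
Your proof is correct and follows exactly the route indicated by the paper, which simply records that the estimate is obtained ``By Theorem \ref{HolderOr}, Lemma \ref{charorlc} and \eqref{2.3}''. Your write-up supplies precisely these three steps in the right order and with the right direction of \eqref{2.3}.
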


\

\section{\large{The boundedness of fractional maximal operator}}

In this section, we shall give a necessary and sufficient condition for the boundedness of $M_{\a}$ on Orlicz spaces and weak Orlicz spaces. We begin with the boundedness of the maximal operator on Orlicz spaces.
\begin{theorem}\label{Maxorl}\cite{KokKrbec}
Let $\Phi$ be a Young function.

(i)The operator $M$ is bounded from $L^{\Phi}(\Rn)$ to  $WL^{\Phi}(\Rn)$, and the inequality
\begin{equation}\label{MbdninqW}
\|M f\|_{WL^{\Phi}}\leq C_0\|f\|_{L^{\Phi}}
\end{equation}
holds with constant $C_0$ independent of $f$.

(ii) The operator $M$ is bounded on $L^{\Phi}(\Rn)$, and the inequality
\begin{equation}\label{MbdninqS}
\|M f\|_{L^{\Phi}}\leq C_0\|f\|_{L^{\Phi}}
\end{equation}
holds with constant $C_0$ independent of $f$ if and only if $\Phi\in\nabla_2$.
\end{theorem}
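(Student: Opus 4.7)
\medskip

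\noindent\textbf{Proof plan.} For part (i), my plan is to reduce the weak-type estimate in the Orlicz setting to the classical weak $(1,1)$ inequality $|\{Mf>s\}|\le \frac{A}{s}\int_{\R^n}|f|\,dx$. Set $\lambda=C_0\|f\|_{L^\Phi}$ for a constant $C_0\ge A$ to be chosen; by homogeneity it suffices to show $\sup_{t>0}\Phi(t)\,|\{Mf>\lambda t\}|\le 1$. I would split $f=f\chi_{\{|f|>\lambda t/2\}}+f\chi_{\{|f|\le \lambda t/2\}}$ so that the latter piece obeys $M(\cdot)\le \lambda t/2$ trivially, hence $\{Mf>\lambda t\}\subset\{M(f\chi_{\{|f|>\lambda t/2\}})>\lambda t/2\}$. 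Applying weak $(1,1)$ to the first piece bounds the measure by $\frac{2A}{\lambda t}\int_{\{|f|>\lambda t/2\}}|f|\,dx$. The remaining ingredient is convexity of $\Phi$ (with $\Phi(0)=0$), which makes $\Phi(r)/r$ nondecreasing; on $\{|f|>\lambda t/2\}$ this gives $|f|\le \frac{\lambda t}{2\Phi(C_0 t/2)}\Phi(|f|/\|f\|_{L^\Phi})$, and since $\int\Phi(|f|/\|f\|_{L^\Phi})\le 1$ by \eqref{orlpr}, convexity again ($\Phi(C_0 t/2)\ge (C_0/2)\Phi(t)$ when $C_0\ge 2$) closes the estimate by choosing $C_0$ large enough. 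Note that no $\Delta_2$ or $\nabla_2$ is needed here.

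For the sufficiency in (ii), my plan is to combine (i) with the strong-type endpoint that the $\nabla_2$ condition supplies. The condition $\Phi\in\nabla_2$ is equivalent to the existence of $p>1$ such that $\Phi(r)/r^p$ is essentially nondecreasing, equivalently $\int_0^r \Phi(s)/s^2\,ds\lesssim \Phi(r)/r$. Using the layer-cake formula $\int\Phi(Mf)\,dx=\int_0^\infty \Phi'(t)|\{Mf>t\}|\,dt$, substitute the weak-type bound for $|\{Mf>t\}|$ obtained via the same Calder\'on--Zygmund truncation as in (i), swap the order of integration, and apply the $\nabla_2$ integral estimate to pass from $\int_0^{2|f(x)|}\Phi'(t)/t\,dt$ to $\lesssim \Phi(|f(x)|)/|f(x)|$. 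The resulting bound $\int \Phi(Mf)\,dx\lesssim \int \Phi(|f|)\,dx$ yields \eqref{MbdninqS} after the usual rescaling.

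For the necessity, I would test the strong inequality against characteristic functions of balls and read off $\nabla_2$ from the resulting integral condition. For $B=B(0,r)$ one has $M\chi_B(x)\approx \min\bigl(1,|B|/|x|^n\bigr)$, and by Lemma \ref{charorlc} the right-hand side of \eqref{MbdninqS} equals $C/\Phi^{-1}(|B|^{-1})$. Feeding these into the definition of the Orlicz norm, doing a polar substitution $u=|B|/|x|^n$, and letting $|B|\to\infty$ (or $|B|\to 0$) converts the boundedness into the integral growth condition equivalent to $\Phi\in\nabla_2$.

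The main obstacle I anticipate is the necessity direction: weak-type estimates alone are compatible with $\Phi\notin\nabla_2$, so one must extract the precise quantitative growth of $\Phi$ from the strong-type bound, and the polar-coordinate substitution has to be set up carefully so that the resulting integral inequality is recognizably one of the many equivalent formulations of $\nabla_2$ (for instance $\int_0^r \Phi(s)/s^2\,ds\lesssim \Phi(r)/r$). The sufficiency manipulation with $\Phi'$ also requires a small regularization argument since a general Young function need not be $C^1$, but this is routine.
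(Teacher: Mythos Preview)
The paper does not prove this theorem at all: it is quoted verbatim from \cite{KokKrbec} (note the citation attached to the theorem header) and is used only as a black box in the proof of Theorem~\ref{AdamsFrMaxCharOrl}. There is therefore no ``paper's own proof'' to compare your proposal against.

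That said, your plan is the standard one and is essentially correct. For (i), the truncation $f=f\chi_{\{|f|>\lambda t/2\}}+f\chi_{\{|f|\le \lambda t/2\}}$ combined with the weak $(1,1)$ bound and the monotonicity of $\Phi(r)/r$ is exactly the classical argument; your constant bookkeeping closes. For the sufficiency in (ii), the layer-cake computation with $\Phi'$ and the integral form $\int_0^r \Phi(s)s^{-2}\,ds\lesssim \Phi(r)/r$ of $\nabla_2$ is the textbook route; the regularization issue you flag is indeed routine (one may replace $\Phi'$ by the right derivative, which exists everywhere for a convex function). For the necessity, testing on $\chi_B$ and using $M\chi_B(x)\approx \min(1,|B|\,|x|^{-n})$ is the right idea; just be careful that after the substitution you land precisely on one of the known equivalent forms of $\nabla_2$ (e.g., the integral condition above, or the existence of $p>1$ with $\Phi(r)/r^p$ almost increasing), since this step is where imprecise arguments sometimes go astray. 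None of this, however, is in the present paper.
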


We recall that, for functions  $\Phi$ and $\Psi$  from $[0,\infty )$
into $[0,\infty ]$, the function $\Psi$ is said to dominate $\Phi$ globally if there exists
a positive constant $c$  such that $\Phi(s)\le\Psi(cs)$ for all $s\geq0$.

In the theorem below we also use the notation
\begin{equation}\label{chi2.3}
\widetilde{\Psi_P}(s)=\int _{0}^{s}r^{P^{\prime}-1}(\mathcal{B}_{P}^{-1}(r^{P^{\prime}}))^{P^{\prime}}dr,
\end{equation}
where  $1<P\le \infty $ and $\widetilde{\Psi_{P}}(s)$ is the Young conjugate funtion to $\Psi_P(s)$, where $\mathcal{B}_{P}^{-1}(s)$ is inverses to
$$
\mathcal{B}_{P}(s)= \int _{0}^{s}\frac{\Psi(t)}{t^{1+P^{\prime}}}dt.
$$

In \cite{Cianchi1999}, Cianchi found the necessary and sufficient conditions for the boundedness of $M_{\a}$ on Orlicz spaces.
\begin{theorem}\label{CianchiFRMax} Let $0< \a<n$.

(i) $M_{\a}$ is bounded from $L^{\Phi}(\Rn)$ to $WL^{\Psi}(\Rn)$ if and only if
\begin{equation}\label{condweakFrMCia}
\text{$\Phi$ dominates globally the function $Q$,}
\end{equation}
whose inverse is given by
$$
Q^{-1}(r)=r^{\a/n}\Psi^{-1}(r).
$$

(ii) $M_{\a}$ is bounded from $L^{\Phi}(\Rn)$ to $L^{\Psi}(\Rn)$ if and only if
\begin{equation}\label{condstrFrMCia}
\int_{0}^{1} \frac{\Psi(t)}{t^{1+n/(n-\a)}}dt<\i \text{  and $\Phi$ dominates globally the function $\Psi_{n/\a}$}.
\end{equation}
Here, $\Psi_{n/\a}$ is the Young function defined as in \eqref{chi2.3}.
\end{theorem}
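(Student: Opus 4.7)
My plan is to combine the Orlicz--H\"older estimate of Lemma \ref{lemHold} with a Vitali covering of the level sets of $M_{\a}f$, matching the result against the growth condition encoded by the global domination hypothesis. The necessity direction uses the standard normalized test function $\Phi^{-1}(|B|^{-1})\chi_{B}$.

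The pointwise input is that, by Lemma \ref{lemHold}, for any ball $B\ni x$,
$$|B|^{\a/n-1}\int_B|f(y)|\,dy\le 2\,|B|^{\a/n}\,\Phi^{-1}\!\bigl(|B|^{-1}\bigr)\,\|f\|_{L^{\Phi}(B)},$$
so control of $M_{\a}f(x)$ reduces to control of $|B|^{\a/n}\Phi^{-1}(|B|^{-1})$. Since $\Phi$ dominating $Q$ globally is equivalent, after passing to inverses, to $\Phi^{-1}(r)\lesssim r^{\a/n}\Psi^{-1}(r)$ for all $r>0$, i.e.\ $|B|^{\a/n}\Phi^{-1}(|B|^{-1})\lesssim \Psi^{-1}(|B|^{-1})$, this bridges the operator to the pair $(\Phi,\Psi)$.

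For the sufficiency in (i), I would normalize $\|f\|_{L^{\Phi}}\le 1$, set $E_t=\{M_{\a}f>t\}$, and invoke a Vitali selection to produce disjoint balls $\{B_j\}$ with $E_t\subset\bigcup_j 5B_j$ and $|B_j|^{\a/n-1}\int_{B_j}|f|>t$. Two estimates follow: (a) combining the ball inequality with the Orlicz H\"older bound and the domination hypothesis gives $|B_j|\lesssim 1/\Psi(t/C)$; (b) Jensen's inequality for $\Phi$ on each $B_j$, together with $\int_{B_j}|f|>t|B_j|^{1-\a/n}$ and the disjointness, yields the modular bound $\sum_j|B_j|\Phi(t|B_j|^{-\a/n})\le\int\Phi(|f|)\le 1$. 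Feeding (a) into (b) via the same domination inequality produces $\Psi(t)\,|E_t|\lesssim 1$, i.e.\ $\|M_{\a}f\|_{WL^{\Psi}}\lesssim\|f\|_{L^{\Phi}}$. For necessity, the test function $f_B=\Phi^{-1}(|B|^{-1})\chi_{B}$ has $\|f_B\|_{L^{\Phi}}\le 1$ by Lemma \ref{charorlc}, and a direct computation gives $M_{\a}f_B(x)\ge |B|^{\a/n}\Phi^{-1}(|B|^{-1})$ for $x\in B$; the assumed weak-type bound at level comparable to this value forces $|B|^{\a/n}\Phi^{-1}(|B|^{-1})\lesssim \Psi^{-1}(|B|^{-1})$ for every ball $B$, which is exactly the global domination of $Q$ by $\Phi$.

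For part (ii) the weak-type sum of step (a)--(b) is insufficient: the strong norm $\|M_{\a}f\|_{L^{\Psi}}$ requires summing modular contributions across all levels $t$ simultaneously, and this is where the integrability condition $\int_0^1\Psi(t)/t^{1+n/(n-\a)}\,dt<\infty$ enters, as a condition at the origin guaranteeing that the tail of $\mathcal{B}_{n/\a}$ is finite. The Young function $\Psi_{n/\a}$ defined in \eqref{chi2.3} is built so that global domination of $\Psi_{n/\a}$ by $\Phi$ is precisely equivalent to the summability of these modular contributions, and the proof passes from (i) to (ii) by a layer-cake computation using the conjugate pair $(\Psi_{n/\a},\widetilde{\Psi_{n/\a}})$. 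The main obstacle is here: correctly identifying $\Psi_{n/\a}$ as the extremal target function, and establishing both directions with sharp constants, demands a careful manipulation of Young conjugates and of the inverse of $\mathcal{B}_{n/\a}$, and it does not reduce to the Vitali-plus-Jensen template that handles (i).
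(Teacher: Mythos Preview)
The paper does not prove this theorem at all: Theorem~\ref{CianchiFRMax} is stated as a known result of Cianchi \cite{Cianchi1999} and is quoted without argument. There is therefore no ``paper's own proof'' to compare your proposal against. The paper's actual contribution on this front is Theorem~\ref{AdamsFrMaxCharOrl}, which gives a different (and simpler) characterization---condition~\eqref{adRieszCharOrl2}, namely $r^{-\alpha/n}\Phi^{-1}(r)\le C\Psi^{-1}(r)$---and proves it by Hedberg's trick combined with the Orlicz boundedness of the Hardy--Littlewood maximal operator (Theorem~\ref{Maxorl}), not by Vitali covering arguments. The link between the two characterizations is recorded a posteriori as a corollary.

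Regarding the substance of your sketch: your necessity argument for part~(i) is essentially the same test-function computation the paper carries out for Theorem~\ref{AdamsFrMaxCharOrl}, and you have correctly observed that global domination of $Q$ by $\Phi$ unwinds to exactly~\eqref{adRieszCharOrl2}. Your sufficiency argument for~(i) via Vitali selection and Jensen is a plausible route, but the passage ``feeding (a) into (b) \ldots\ produces $\Psi(t)|E_t|\lesssim 1$'' hides a nontrivial step: from $\sum_j|B_j|\Phi(t|B_j|^{-\alpha/n})\le 1$ and $|B_j|\lesssim 1/\Psi(t/C)$ one still needs to extract a uniform lower bound $\Phi(t|B_j|^{-\alpha/n})\gtrsim \Psi(t)$, and this requires using the domination hypothesis in the form $Q(s)\le\Phi(cs)$ together with the identity $Q(t\Psi(t)^{\alpha/n})=\Psi(t)$; you should spell this out. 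For part~(ii) you correctly identify that the strong-type bound does not fall to the same template and that the auxiliary function $\Psi_{n/\alpha}$ is the crux, but what you have written is a description of the difficulty rather than an argument---so part~(ii) remains unproved in your proposal.
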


In order to prove our main theorem, we also need the following lemma.
\begin{lemma}\label{estFrMax}
If $B_0:=B(x_0,r_0)$, then $|B_0|^{\frac{\a}{n}}\leq  M_{\a} \chi_{B_0}(x)$ for every $x\in B_0$.
\end{lemma}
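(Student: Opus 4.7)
The plan is essentially a one-line observation, so I will describe it as such and note why nothing further is needed.

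First I would unpack the definition of $M_\alpha$ at the point $x$: it is the supremum, over all balls $B$ containing $x$, of $|B|^{-1+\alpha/n}\int_B |\chi_{B_0}(y)|\,dy$. Since by hypothesis $x \in B_0$, the ball $B_0$ itself is one admissible choice in this supremum. Taking $B = B_0$ gives a single explicit lower bound, and that lower bound is what we want.

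Concretely, I would compute
\[
M_\alpha \chi_{B_0}(x) \;\geq\; |B_0|^{-1+\alpha/n} \int_{B_0} \chi_{B_0}(y)\,dy \;=\; |B_0|^{-1+\alpha/n} \cdot |B_0| \;=\; |B_0|^{\alpha/n},
\]
which is the desired inequality. There is no obstacle here: the only ingredient is that $x \in B_0$ ensures $B_0$ is eligible as a competitor in the supremum defining $M_\alpha \chi_{B_0}(x)$, and then evaluating the integral of $\chi_{B_0}$ over $B_0$ yields exactly $|B_0|$, which cancels the $|B_0|^{-1}$ factor.

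Since the bound is needed for every $x \in B_0$, I would just note that the choice of competing ball $B_0$ is independent of which $x \in B_0$ we look at, so the estimate holds uniformly on $B_0$. This completes the proof.
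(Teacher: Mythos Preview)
Your proof is correct and follows exactly the same approach as the paper: both use that $x\in B_0$ allows $B_0$ itself as a competitor in the supremum, and the resulting term $|B_0|^{-1+\alpha/n}\int_{B_0}\chi_{B_0}=|B_0|^{\alpha/n}$ gives the claimed lower bound.
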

\begin{proof}
For $x\in B_0$, we get
\begin{align*}
M_{\a} \chi_{B_0}(x)= \sup\limits_{B\ni x}|B|^{-1+ \frac{\a}{n}}|B \cap B_0|\geq  |B_0|^{-1+ \frac{\a}{n}}|B_0 \cap B_0|=  |B_0|^{\frac{\a}{n}}.
\end{align*}
\end{proof}

The following result completely characterizes the boundedness of $M_{\a}$ on Orlicz spaces.
\begin{theorem}\label{AdamsFrMaxCharOrl}
Let $0< \a<n$, $\Phi, \Psi$ be Young functions and $\Phi\in\mathcal{Y}$. The condition
\begin{equation}\label{adRieszCharOrl2}
r^{-\frac{\alpha}{n}}\Phi^{-1}\big(r\big)\le C \Psi^{-1}\big(r\big)
\end{equation}
for all $r>0$, where $C>0$ does not depend on $r$, is necessary and sufficient for the boundedness of $M_{\a}$ from $L^{\Phi}(\Rn)$ to $WL^{\Psi}(\Rn)$. Moreover, if $\Phi\in\nabla_2,$ the condition \eqref{adRieszCharOrl2} is necessary and sufficient for the boundedness of $M_{\a}$ from $L^{\Phi}(\Rn)$ to $L^{\Psi}(\Rn)$.
\end{theorem}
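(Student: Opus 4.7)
The plan is to handle necessity by testing on characteristic functions of balls, and sufficiency via a Hedberg-type pointwise inequality that reduces everything to Theorem \ref{Maxorl}. For \textbf{necessity}, I would take $f=\chi_{B_0}$ with $B_0=B(x_0,r_0)$ an arbitrary ball. Lemma \ref{estFrMax} gives $|B_0|^{\alpha/n}\chi_{B_0}\le M_\alpha\chi_{B_0}$ pointwise, so by monotonicity of $\|\cdot\|_{WL^\Psi}$ and the assumed boundedness,
\[
|B_0|^{\alpha/n}\,\|\chi_{B_0}\|_{WL^\Psi}\le\|M_\alpha\chi_{B_0}\|_{WL^\Psi}\le C\,\|\chi_{B_0}\|_{L^\Phi}.
\]
Evaluating the two characteristic-function norms via Lemma \ref{charorlc} and setting $r=|B_0|^{-1}\in(0,\infty)$ yields exactly \eqref{adRieszCharOrl2}. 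Since strong-type boundedness implies weak-type boundedness, the same argument disposes of the necessity for part (ii).

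For \textbf{sufficiency}, the technical heart is the Hedberg-type inequality
\[
M_\alpha f(x)\le 2C\,\lambda\,\Psi^{-1}\!\Bigl(\Phi\bigl(Mf(x)/(2\lambda)\bigr)\Bigr),\qquad \lambda\ge\|f\|_{L^\Phi},
\]
which I would prove by combining, for each ball $B\ni x$, the elementary bound $|B|^{\alpha/n-1}\int_B|f|\le|B|^{\alpha/n}Mf(x)$ with the H\"older-type bound $|B|^{\alpha/n-1}\int_B|f|\le 2|B|^{\alpha/n}\Phi^{-1}(|B|^{-1})\lambda$ from Lemma \ref{lemHold}. Taking the minimum of the two and optimising in $|B|$ — the balance is at $|B|^{-1}=\Phi(Mf(x)/(2\lambda))$, where the increasing bound (I) meets the (a priori non-monotone) bound (II) — and then applying \eqref{adRieszCharOrl2} with $r=|B|^{-1}$ to replace $|B|^{\alpha/n}\Phi^{-1}(|B|^{-1})$ by $C\,\Psi^{-1}(|B|^{-1})$ yields the claimed estimate.

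Applying $\Psi$ to both sides of the Hedberg inequality (using $\Psi(\Psi^{-1}(s))\le s$) and choosing $\lambda=(C_0/2)\|f\|_{L^\Phi}$, where $C_0\ge 2$ is the operator norm of $M$ (enlarged if necessary), produces
\[
\Psi\!\bigl(M_\alpha f(x)/(C_0C\,\|f\|_{L^\Phi})\bigr)\le \Phi\!\bigl(Mf(x)/(C_0\,\|f\|_{L^\Phi})\bigr).
\]
For part (i), Theorem \ref{Maxorl}(i) gives $\|Mf/(C_0\|f\|_{L^\Phi})\|_{WL^\Phi}\le 1$, and the level-set chain $\{M_\alpha f/(C_0C\|f\|_{L^\Phi})>t\}\subseteq\{Mf/(C_0\|f\|_{L^\Phi})\ge\Phi^{-1}(\Psi(t))\}$ (valid since $\Phi\in\mathcal{Y}$ is a continuous bijection), together with left-continuity of $\Phi$, forces $\Psi(t)\,|\{M_\alpha f/(C_0C\|f\|_{L^\Phi})>t\}|\le 1$, whence $\|M_\alpha f\|_{WL^\Psi}\le C_0C\|f\|_{L^\Phi}$. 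For part (ii), $\Phi\in\nabla_2$ supplies strong $L^\Phi$-boundedness of $M$ (Theorem \ref{Maxorl}(ii)), so $\int\Phi(Mf/(C_0\|f\|_{L^\Phi}))\,dx\le 1$; integrating the pointwise inequality then gives $\int\Psi(M_\alpha f/(C_0C\|f\|_{L^\Phi}))\,dx\le 1$, i.e. $\|M_\alpha f\|_{L^\Psi}\le C_0C\|f\|_{L^\Phi}$. The main delicacy is the constant bookkeeping in this final step: the factor $2$ from H\"older, the constant $C$ from \eqref{adRieszCharOrl2}, and the operator norm $C_0$ of $M$ must be aligned through the free parameter $\lambda$ so that, after composition with $\Psi$, the modular inequalities close to $\le 1$.
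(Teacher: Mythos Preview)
Your proposal is correct and follows essentially the same approach as the paper: necessity via testing on $\chi_{B_0}$ using Lemmas \ref{estFrMax} and \ref{charorlc}, and sufficiency via a Hedberg-type pointwise bound $M_\alpha f\lesssim \|f\|_{L^\Phi}\,(\Psi^{-1}\circ\Phi)\bigl(Mf/(C_0\|f\|_{L^\Phi})\bigr)$ that is then fed into Theorem \ref{Maxorl}. The only cosmetic difference is that the paper reaches the pointwise inequality through the local split $f=f\chi_{2B}+f\chi_{(2B)^c}$ and a sum-of-two-terms optimized in $r$, whereas you take the minimum of the two per-ball bounds and optimize in $|B|$; these are equivalent renderings of Hedberg's trick.
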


\begin{proof}
For a ball $B=B(x,r)$, let $f_1=f\chi _{B(x,2r)}$, $f_2=f-f_1$ and $y$ be an arbitrary point in $B$. If $B(y,t)\cap {\dual}(B(x,2r))\neq\emptyset,$ then $t>r$. Indeed, if $z\in B(y,t)\cap  {\dual} (B(x,2r)),$
then $t > |y-z| \geq |x-z|-|x-y|>2r-r=r$.

On the other hand, $B(y,t)\cap {\dual} (B(x,2r))\subset B(x,2t)$. Indeed, if  $z\in B(y,t)\cap {\dual} (B(x,2r))$, then
we get $|x-z|\leq |y-z|+|x-y|<t+r<2t$.

Hence by Lemma \ref{lemHold}
\begin{equation*}
\begin{split}
M_{\a} f_2(y) & \lesssim \sup_{t>0}\frac{1}{|B(y,t)|^{1-\frac{\a}{n}}} \int_{B(y,t)\cap {{\dual}(B(x,2r))}}|f(z)|d z
\\
&\lesssim  \, \sup_{t>2r} \frac{1}{|B(x,t)|^{1-\frac{\a}{n}}} \int_{B(x,t)}|f(z)| d z
\\
&\lesssim \|f\|_{L^{\Phi}} \sup_{r<t<\infty} t^{\a} \, \Phi^{-1}(|B(x,t)|^{-1}).
\end{split}
\end{equation*}
Consequently from Hedberg's trick, see \cite{Hedberg}, and the last inequality, we have
\begin{equation*}
M_\a f(y) \lesssim
r^\a Mf(y)+\|f\|_{L^{\Phi}} \sup_{r<t<\infty} t^{\a} \, \Phi^{-1}(t^{-n}).
\end{equation*}

Thus, by \eqref{adRieszCharOrl2} we obtain
\begin{align*}
|M_{\a} f(y)|  \lesssim  Mf(y)\frac{\Psi^{-1}(r^{-n})}{\Phi^{-1}(r^{-n})} + \|f\|_{L^{\Phi}} \, \Psi^{-1}(r^{-n}).
\end{align*}
Choose $r>0$ so that $\Phi^{-1}(r^{-n})=\frac{Mf(y)}{C_0\|f\|_{L^{\Phi}}}$. Then
$$
\frac{\Psi^{-1}(r^{-n})}{\Phi^{-1}(r^{-n})}=\frac{(\Psi^{-1}\circ\Phi)(\frac{Mf(y)}{C_0\|f\|_{L^{\Phi}}})}{\frac{Mf(y)}{C_0\|f\|_{L^{\Phi}}}}.
$$
Therefore, we get for all $y\in B$
$$
|M_{\a} f(y)|  \leq C_1 \|f\|_{L^{\Phi}} (\Psi^{-1}\circ\Phi) \Big(\frac{Mf(y)}{C_0\|f\|_{L^{\Phi}}}\Big).
$$

Let $C_0$ be as in \eqref{MbdninqW}. Then by Theorem \ref{Maxorl}, we have
\begin{align*}
&\sup_{r>0}\Psi(r)\, m\Big(B, \frac{|M_{\a} f(y)|}{C_1\|f\|_{L^{\Phi}}},r\Big)=\sup_{r>0}r\, m\Big(B, \Psi\Big(\frac{|M_{\a} f(y)|}{C_1\|f\|_{L^{\Phi}}}\Big),r\Big)
\\
\leq &\sup_{r>0}r\, m\Big(B, \Phi\Big(\frac{M f(y)}{C_0\|f\|_{L^{\Phi}}}\Big),r\Big)\leq\sup_{r>0}\Phi(r)\, m\Big(\frac{M f(z)}{\|Mf\|_{WL^{\Phi}}},r\Big)\leq 1,
\end{align*}
i.e.
\begin{equation}\label{gfhajyufr}
\|M_{\a}f\|_{WL^{\Psi}(B)}\lesssim \|f\|_{L^{\Phi}}.
\end{equation}
By taking supremum over $B$ in \eqref{gfhajyufr}, we get
$$
\|M_{\a}f\|_{WL^{\Psi}}\lesssim \|f\|_{L^{\Phi}},
$$
since the constants in \eqref{gfhajyufr} don't depend on $x$ and $r$.

Let $C_0$ be as in \eqref{MbdninqS}. Since $\Phi\in\nabla_2$, by Theorem \ref{Maxorl}, we have
$$
\int_{B}\Psi\left(\frac{|M_{\a} f(y)|}{C_1\|f\|_{L^{\Phi}}}\right)dy\leq \int_{B}\Phi\left(\frac{M f(y)}{C_0\|f\|_{L^{\Phi}}}\right)dy\leq \int_{\Rn}\Phi\left(\frac{M f(z)}{\|Mf\|_{L^{\Phi}}}\right)dz\leq 1,
$$
i.e.
\begin{equation}\label{gfhaj}
\|M_{\a}f\|_{L^{\Psi}(B)}\lesssim \|f\|_{L^{\Phi}}.
\end{equation}
By taking supremum over $B$ in \eqref{gfhaj}, we get
$$
\|M_{\a}f\|_{L^{\Psi}}\lesssim \|f\|_{L^{\Phi}},
$$
since the constants in \eqref{gfhaj} don't depend on $x$ and $r$.

We shall now prove the necessity. Let $B_0=B(x_0,r_0)$ and $x\in B_0$. By Lemma \ref{estFrMax}, we have $r_0^{\alpha}\leq C M_{\a} \chi_{B_0}(x)$.
Therefore, by Lemma \ref{charorlc}, we have
\begin{align*}
r_0^{\alpha}&\lesssim \Psi^{-1}(|B_0|^{-1})\|M_{\a} \chi_{B_0}\|_{WL^{\Psi}(B_0)} \lesssim \Psi^{-1}(|B_0|^{-1})\|M_{\a} \chi_{B_0}\|_{WL^{\Psi}}
\\
&\lesssim \Psi^{-1}(|B_0|^{-1})\|\chi_{B_0}\|_{L^{\Phi}}\lesssim \frac{\Psi^{-1}(r_{0}^{-n})}{\Phi^{-1}(r_{0}^{-n})}
\end{align*}
and
\begin{align*}
r_0^{\alpha}&\lesssim \Psi^{-1}(|B_0|^{-1})\|M_{\a} \chi_{B_0}\|_{L^{\Psi}(B_0)} \lesssim \Psi^{-1}(|B_0|^{-1})\|M_{\a} \chi_{B_0}\|_{L^{\Psi}}
\\
&\lesssim \Psi^{-1}(|B_0|^{-1})\|\chi_{B_0}\|_{L^{\Phi}}\lesssim \frac{\Psi^{-1}(r_{0}^{-n})}{\Phi^{-1}(r_{0}^{-n})}.
\end{align*}
Since this is true for every $r_0>0$, we are done.
\end{proof}

We recover the following well known result by taking $\Phi(t)=t^p$ at Theorem \ref{AdamsFrMaxCharOrl}.
\begin{corollary}
Let $0< \a<n$ and $1\le p\le n/\a$. Then the condition $1/q=1/p-\a/n$ is necessary and sufficient for the boundedness of $M_{\a}$ from
$L^{p}(\Rn)$ to $WL^{q}(\Rn)$ and for $p>1$ from $L^{p}(\Rn)$ to $L^{q}(\Rn)$.
\end{corollary}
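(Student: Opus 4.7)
The plan is to recognize the corollary as a direct specialization of Theorem \ref{AdamsFrMaxCharOrl} to the Young functions $\Phi(t)=t^p$ and $\Psi(t)=t^q$, for which $L^\Phi(\Rn)=L^p(\Rn)$, $WL^\Psi(\Rn)=WL^q(\Rn)$, $L^\Psi(\Rn)=L^q(\Rn)$, and the inverses read off immediately as $\Phi^{-1}(r)=r^{1/p}$ and $\Psi^{-1}(r)=r^{1/q}$. For every $1\le p<\i$ the power $t^p$ lies in $\mathcal{Y}$, so the standing hypothesis of Theorem \ref{AdamsFrMaxCharOrl} is automatic and the whole content of the corollary lies in translating condition \eqref{adRieszCharOrl2} into a condition on the exponents.

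The key step is therefore to substitute these inverses into \eqref{adRieszCharOrl2}, which reduces it to the scale-invariant inequality $r^{1/p-\a/n-1/q}\le C$ holding uniformly for all $r>0$. Letting $r\to 0^+$ and $r\to\i$ separately shows at once that the inequality can hold for \emph{some} finite $C$ if and only if the exponent vanishes, i.e.\ $1/q=1/p-\a/n$. Thus \eqref{adRieszCharOrl2} is equivalent to the stated index relation, and Theorem \ref{AdamsFrMaxCharOrl}(i) delivers the weak-type $L^p\to WL^q$ characterization at one stroke.

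For the strong-type statement, the only additional input needed is the verification that $t^p\in\nabla_2$ whenever $p>1$, which is immediate: the $\nabla_2$-inequality $\Phi(r)\le \frac{1}{2C}\Phi(Cr)$ amounts to $C^{p-1}\ge 2$, which is achievable since $p-1>0$. The strong half of Theorem \ref{AdamsFrMaxCharOrl} then yields the $L^p\to L^q$ equivalence under the same index relation. I do not expect any real obstacle in this proof; the only bookkeeping point is the boundary case $p=n/\a$ (which forces $q=\i$), where one either excludes it from the weak-type statement or adopts the convention $WL^\i=L^\i$, but the core argument is purely an exponent check.
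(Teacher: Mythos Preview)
Your proposal is correct and follows exactly the paper's approach: the paper presents this corollary without proof, merely noting that it is recovered by taking $\Phi(t)=t^p$ in Theorem \ref{AdamsFrMaxCharOrl}. Your write-up simply makes explicit the routine exponent check that condition \eqref{adRieszCharOrl2} becomes $r^{1/p-\alpha/n-1/q}\le C$ (forcing $1/q=1/p-\alpha/n$) and the observation that $t^p\in\nabla_2$ for $p>1$.
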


From Theorems \ref{CianchiFRMax} and \ref{AdamsFrMaxCharOrl} we have the following corollary.
\begin{corollary} Let $0<\a<n$, $\Phi, \Psi$ be Young functions and $\Phi\in\mathcal{Y}$, then:
	
1) Condition \eqref{condweakFrMCia} holds if and only if condition \eqref{adRieszCharOrl2} holds.
	
2) Moreover if $\Phi\in\nabla_2$, then condition \eqref{condstrFrMCia} holds if and only if \eqref{adRieszCharOrl2} holds.
\end{corollary}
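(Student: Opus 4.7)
The plan is to observe that this corollary is a direct consequence of combining Theorems \ref{CianchiFRMax} and \ref{AdamsFrMaxCharOrl}, both of which characterize the \emph{same} boundedness property of $M_\a$ but via ostensibly different conditions. So the strategy is transitivity: each of the two conditions is equivalent to a boundedness statement, hence they are equivalent to each other.

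For part (1), I would argue as follows. Under the standing hypothesis that $\Phi\in\mathcal{Y}$ and $\Phi,\Psi$ are Young functions, Theorem \ref{CianchiFRMax}(i) asserts that condition \eqref{condweakFrMCia} is equivalent to the boundedness of $M_\a\colon L^\Phi(\Rn)\to WL^\Psi(\Rn)$, while Theorem \ref{AdamsFrMaxCharOrl} asserts that condition \eqref{adRieszCharOrl2} is equivalent to the same boundedness. Chaining the two equivalences, \eqref{condweakFrMCia} $\Longleftrightarrow$ ($M_\a$ weak bounded) $\Longleftrightarrow$ \eqref{adRieszCharOrl2}.

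For part (2), the argument is strictly analogous, now passing through the strong-type bound $M_\a\colon L^\Phi(\Rn)\to L^\Psi(\Rn)$. The additional hypothesis $\Phi\in\nabla_2$ is exactly what Theorem \ref{AdamsFrMaxCharOrl} requires in its strong-type half in order to upgrade the characterization from weak-type to strong-type, and it is precisely what we need to invoke Theorem \ref{CianchiFRMax}(ii) on the Cianchi side as well. Thus under $\Phi\in\mathcal{Y}\cap\nabla_2$ both \eqref{condstrFrMCia} and \eqref{adRieszCharOrl2} are equivalent to the $L^\Phi\to L^\Psi$ boundedness of $M_\a$, hence equivalent to each other.

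The only thing that could go wrong is a mismatch of hypotheses between the two theorems: one must make sure that, in each part, the shared hypotheses on $\Phi$ and $\Psi$ (Young functions, $\Phi\in\mathcal{Y}$, and in part (2) additionally $\Phi\in\nabla_2$) suffice to apply \emph{both} theorems simultaneously. Inspecting the statements shows this is the case, so no extra work beyond the transitivity argument is needed; there is no real analytic obstacle, and no direct manipulation of the functional-analytic conditions \eqref{condweakFrMCia}, \eqref{condstrFrMCia}, \eqref{adRieszCharOrl2} is required.
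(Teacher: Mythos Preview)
Your proposal is correct and matches the paper's approach exactly: the corollary is stated there without proof, as an immediate consequence of combining Theorems \ref{CianchiFRMax} and \ref{AdamsFrMaxCharOrl} through the boundedness of $M_\a$. One small correction: Theorem \ref{CianchiFRMax}(ii) does not itself require $\Phi\in\nabla_2$---that hypothesis is needed only for the strong-type half of Theorem \ref{AdamsFrMaxCharOrl}---but this does not affect the validity of your transitivity argument.
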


\

\section{Characterization of Lipschitz spaces via commutators}
In this section, as an application of Theorem \ref{AdamsFrMaxCharOrl} we consider the boundedness of $M_{b,\a}$ and $[b,M_{\a}]$ on Orlicz spaces when $b$ belongs to the Lipschitz space, by which some new characterizations of the Lipschitz spaces are given.

\begin{definition}\label{deflip}
Let $0 < \beta < 1$, we say a function $b$ belongs to the Lipschitz space $\dot{\Lambda}_{\beta}(\Rn)$ if there exists a constant $C$ such that for all $x, y \in \Rn$,
$$
|b(x)-b(y)|\le C|x-y|^{\beta}.
$$
The smallest such constant $C$ is called the $\dot{\Lambda}_{\beta}(\Rn)$ norm of $b$ and is denoted by $\|b\|_{\dot{\Lambda}_{\beta}(\Rn)}$.
\end{definition}

To prove the theorems, we need auxiliary results. The first one is the following characterizations
of Lipschitz space, which is due to DeVore and Sharply \cite{DeVSharp}.
\begin{lemma}\label{CharLipSp}
Let $0 < \beta < 1$, we have
$$
\|f\|_{\dot{\Lambda}_{\beta}(\Rn)}\thickapprox\sup_{B}\frac{1}{|B|^{1+\beta/n}}\int_{B}|f(x)-f_{B}|dx,
$$
where $f_{B}=\frac{1}{|B|}\int_{B}f(y)dy$.
\end{lemma}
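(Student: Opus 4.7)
The plan is to prove the two inequalities separately: the easy direction ``$\gtrsim$'' follows directly from the pointwise Lipschitz bound, while ``$\lesssim$'' requires a telescoping argument on dyadic balls combined with the Lebesgue differentiation theorem.

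For the direction $\sup_B |B|^{-1-\beta/n}\int_B |f(x)-f_B|\,dx \lesssim \|f\|_{\dot\Lambda_\beta}$, I would fix a ball $B=B(x_0,r)$ and observe that for any $x\in B$,
\[
|f(x)-f_B| \le \frac{1}{|B|}\int_B |f(x)-f(y)|\,dy \le \|f\|_{\dot\Lambda_\beta}\,\frac{1}{|B|}\int_B |x-y|^\beta\,dy \le C\|f\|_{\dot\Lambda_\beta}\,r^\beta.
\]
Since $r^\beta \approx |B|^{\beta/n}$, integrating over $B$ and dividing by $|B|^{1+\beta/n}$ yields the bound.

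For the reverse direction, set $K:=\sup_B |B|^{-1-\beta/n}\int_B |f-f_B|$. Given two Lebesgue points $x,y$ of $f$, I would take the common ball $B^{\ast}=B\!\left(\tfrac{x+y}{2},|x-y|\right)$ (which contains both $x$ and $y$) and for each of the endpoints telescope along the shrinking chain of balls $B_k(x):=B(x,2^{-k+1}|x-y|)$, $k\ge 0$. Writing
\[
|f_{B_{k+1}(x)}-f_{B_k(x)}| \le \frac{1}{|B_{k+1}(x)|}\int_{B_{k+1}(x)} |f-f_{B_k(x)}|\,dy \le 2^n\cdot K\,|B_k(x)|^{\beta/n},
\]
and summing the geometric series in $k$ yields $|f(x)-f_{B_0(x)}|\le C K|x-y|^\beta$ by Lebesgue differentiation. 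A comparison of averages between $B_0(x)$ and $B^{\ast}$ (and likewise $B_0(y)$ and $B^{\ast}$) gives $|f_{B_0(x)}-f_{B^{\ast}}|\lesssim K|x-y|^\beta$, and combining these three estimates together with the symmetric bound for $y$ produces $|f(x)-f(y)|\le C' K|x-y|^\beta$ a.e.

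The main obstacle is the second direction, specifically handling the change of center between the telescoping sequences rooted at $x$ and at $y$: one must verify that any two balls $B,B'$ of comparable radii with $B\subset \lambda B'$ and $B'\subset \lambda B$ satisfy $|f_B-f_{B'}|\lesssim K|B|^{\beta/n}$, which is proved by inserting the average over the enclosing ball and applying the defining hypothesis of $K$ twice. Once this ``doubling lemma'' for averages is in place, the telescoping assembles cleanly and the Lebesgue differentiation theorem upgrades the control of averages to a pointwise Lipschitz-$\beta$ estimate.
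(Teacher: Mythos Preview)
Your argument is correct and is the standard Campanato-type proof of this characterization: the forward bound is immediate from the pointwise Lipschitz estimate, and the reverse bound follows from telescoping averages over dyadically shrinking balls centered at Lebesgue points $x$ and $y$, bridged through a common ball containing both. The ``doubling lemma'' you isolate---that $|f_B-f_{B'}|\lesssim K|B|^{\beta/n}$ whenever $B$ and $B'$ are nested with comparable measures---is exactly what makes the bridging step work, and your geometric series sums because $0<\beta<1$ (in fact any $\beta>0$ suffices here).

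The paper, however, does not prove this lemma at all: it is stated as a known characterization of the Lipschitz space and attributed to DeVore and Sharpley \cite{DeVSharp}, with no argument supplied. So there is nothing to compare against; your proof simply fills in what the paper takes as background. One minor remark: your conclusion holds a~priori only for Lebesgue points of $f$, hence almost everywhere; to recover the genuine pointwise Lipschitz condition of Definition~\ref{deflip} one tacitly redefines $f$ on a null set (taking the continuous representative), which is the usual convention in this context.
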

\begin{lemma}\label{pwslip}
Let $0<\beta<1$, $0\le \a<n$, $0< \a+\b<n$ and $b\in \dot{\Lambda}_{\beta}(\Rn)$, then the following pointwise estimate holds:
$$
M_{b,\a}f(x)\leq C\|b\|_{\dot{\Lambda}_{\beta}(\Rn)}M_{\a+\b}f(x).
$$
\end{lemma}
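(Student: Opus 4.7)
The proof I have in mind is short and essentially reduces to combining the Lipschitz estimate with the geometry of the ball appearing in the definition of $M_{b,\alpha}$. The plan is to fix $x\in\Rn$, take an arbitrary ball $B=B(x_B,r_B)$ with $B\ni x$, and produce a bound on the corresponding averaged integral in the definition of $M_{b,\alpha}f(x)$ that depends only on $\|b\|_{\dot{\Lambda}_{\beta}(\Rn)}$, $|B|$, and the average of $|f|$ over $B$. After that, one passes to the supremum over all such $B$.

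The first step is to observe that for every $y\in B$ we have $|x-y|\le 2r_B$, and therefore $|x-y|^{\beta}\le (2r_B)^{\beta}\lesssim |B|^{\beta/n}$. Combined with the pointwise Lipschitz bound
\[
|b(x)-b(y)|\le \|b\|_{\dot{\Lambda}_{\beta}(\Rn)}\,|x-y|^{\beta},
\]
valid for all $x,y\in\Rn$ by Definition \ref{deflip}, this gives the uniform estimate
\[
|b(x)-b(y)|\lesssim \|b\|_{\dot{\Lambda}_{\beta}(\Rn)}\,|B|^{\beta/n}\qquad (y\in B).
\]

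The second step is simply to substitute this into the definition of $M_{b,\alpha}$: the constant factor pulls out of the integral, and the exponents on $|B|$ combine,
\[
|B|^{-1+\frac{\alpha}{n}}\int_B |b(x)-b(y)|\,|f(y)|\,dy \lesssim \|b\|_{\dot{\Lambda}_{\beta}(\Rn)}\,|B|^{-1+\frac{\alpha+\beta}{n}}\int_B |f(y)|\,dy.
\]
The right-hand side is precisely a term of the form appearing in the supremum defining $M_{\alpha+\beta}f(x)$. Taking the supremum over all balls $B$ containing $x$ on both sides yields the desired inequality.

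There is really no obstacle here; the only thing to check is the constant produced by bounding $2r_B$ in terms of $|B|^{1/n}$, which only depends on the dimension $n$ and can be absorbed into the implicit constant. The hypothesis $0<\alpha+\beta<n$ is not needed for the pointwise estimate itself but is the natural range ensuring that $M_{\alpha+\beta}f(x)$ is a sensible object (and matches the assumption $0<\alpha<n$ used for $M_\alpha$ elsewhere in the paper).
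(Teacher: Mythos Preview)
Your argument is correct and is exactly the approach in the paper: bound $|b(x)-b(y)|$ by $\|b\|_{\dot{\Lambda}_{\beta}}|x-y|^{\beta}\lesssim \|b\|_{\dot{\Lambda}_{\beta}}|B|^{\beta/n}$ for $y\in B\ni x$, absorb the extra factor $|B|^{\beta/n}$ into the exponent, and take the supremum. In fact your write-up is slightly more detailed than the paper's, which compresses the whole computation into a single chain of inequalities.
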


\begin{proof}
If $b\in \dot{\Lambda}_{\beta}(\Rn)$, then
\begin{align*}
M_{b,\a}(f)(x)&=\sup\limits_{B\ni x}|B|^{-1+ \frac{\a}{n}}\int _{B} |b(x)-b(y)||f(y)|dy\\
&\leq C \|b\|_{\dot{\Lambda}_{\beta}(\Rn)} \sup\limits_{B\ni x}|B|^{-1+ \frac{\a+\b}{n}}\int _{B} |f(y)|dy\\
&= C\|b\|_{\dot{\Lambda}_{\beta}(\Rn)}M_{\a+\b}f(x).
\end{align*}
\end{proof}

\begin{lemma}\label{estFrMaxCom}
If $b\in  L^1_{\rm loc}(\Rn)$ and $B_0:=B(x_0,r_0)$, then
$$
|B_0|^{\frac{\a}{n}}|b(x)-b_{B_0}|\leq M_{b,\a} \chi_{B_0}(x) ~~ \mbox{for every} ~~ x\in B_0.
$$
\end{lemma}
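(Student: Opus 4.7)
The plan is to mimic the proof of Lemma \ref{estFrMax}, but with one extra application of the triangle inequality to introduce the average $b_{B_0}$. Since $x \in B_0$, the ball $B_0$ itself is a legitimate competitor in the supremum defining $M_{b,\alpha}\chi_{B_0}(x)$, so I would simply drop that particular choice into the definition.

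More precisely, first I would write
\[
M_{b,\alpha}\chi_{B_0}(x) \;\geq\; |B_0|^{-1+\alpha/n}\int_{B_0}|b(x)-b(y)|\,dy,
\]
using only that $B_0 \ni x$ and $\chi_{B_0}(y)=1$ on $B_0$. Next, I would pull $b(x)$ out of the integral via the triangle inequality in its ``reverse'' form: since $b(x)$ is constant in $y$,
\[
\int_{B_0}|b(x)-b(y)|\,dy \;\geq\; \Bigl|\int_{B_0}\bigl(b(x)-b(y)\bigr)\,dy\Bigr| \;=\; |B_0|\,\bigl|b(x)-b_{B_0}\bigr|,
\]
where the last equality uses the definition $b_{B_0}=\frac{1}{|B_0|}\int_{B_0}b(y)\,dy$. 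Combining these two inequalities gives
\[
M_{b,\alpha}\chi_{B_0}(x) \;\geq\; |B_0|^{-1+\alpha/n}\cdot|B_0|\,\bigl|b(x)-b_{B_0}\bigr| \;=\; |B_0|^{\alpha/n}\bigl|b(x)-b_{B_0}\bigr|,
\]
which is exactly the claim.

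There is no real obstacle here; the proof is a two-line computation. The only thing to remark is that $b \in L^1_{\rm loc}(\Rn)$ is precisely what is needed to ensure $b_{B_0}$ is well-defined and the integral $\int_{B_0}|b(x)-b(y)|\,dy$ is finite for a.e.\ $x$ (in fact for every $x$ at which $b(x)$ is finite). The estimate is sharp in the same qualitative sense as Lemma \ref{estFrMax}: taking $B=B_0$ in the sup loses no more than the jump from the pointwise deviation $|b(x)-b(y)|$ to its mean deviation $|b(x)-b_{B_0}|$.
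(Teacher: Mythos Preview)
Your proof is correct and follows exactly the same approach as the paper's own argument: take $B=B_0$ in the supremum defining $M_{b,\alpha}\chi_{B_0}(x)$, then apply the inequality $\int_{B_0}|b(x)-b(y)|\,dy \ge \big|\int_{B_0}(b(x)-b(y))\,dy\big|$ to obtain $|B_0|\,|b(x)-b_{B_0}|$. The only cosmetic difference is that the paper writes the intermediate step with $B\cap B_0$ explicitly before specializing to $B=B_0$.
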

\begin{proof}
%
For $x\in B_0$, we get
\begin{align*}
& M_{b,\a} \chi_{B_0}(x)=\sup\limits_{B\ni x}|B|^{-1+ \frac{\a}{n}}\int _{B} |b(x)-b(y)|\chi_{B_0}(y)dy
\\
&=\sup\limits_{B\ni x}|B|^{-1+ \frac{\a}{n}}\int _{B\cap B_0} |b(x)-b(y)|dy
\geq |B_0|^{-1+ \frac{\a}{n}}\int _{B_0\cap B_0} |b(x)-b(y)|dy
\\
&\geq \big| |B_0|^{-1+ \frac{\a}{n}}\int _{B_0} (b(x)-b(y))dy\big|
=|B_0|^{\frac{\a}{n}}|b(x)-b_{B_0}|.
\end{align*}
\end{proof}

The following theorem is valid.
\begin{theorem} \label{CommFrMaxCharOr01}
Let $0<\beta<1$, $0\le \a<n$, $0< \a+\b<n$, $b\in  L^1_{\rm loc}(\Rn)$, $\Phi, \Psi$ be Young functions and $\Phi\in\mathcal{Y}$.

$1.~$ If $\Phi\in \nabla_2$ and the condition
\begin{equation}\label{adFrCharOrl1M}
t^{-\frac{\alpha+\b}{n}}\Phi^{-1}\big(t\big)\le C \Psi^{-1}\big(t\big),
\end{equation}
holds for all $t>0$, where $C>0$ does not depend on $t$, then the condition $b\in \dot{\Lambda}_{\beta}(\Rn)$
is sufficient for the boundedness of $M_{b,\a}$ from $L^{\Phi}(\Rn)$ to $L^{\Psi}(\Rn)$.

$2.~$ If the condition
\begin{equation}\label{adFrCharOrl1Mncs}
\Psi^{-1}(t) \leq C \Phi^{-1}(t)t^{-\frac{\alpha+\b}{n}},
\end{equation}
holds for all $t>0$, where $C>0$ does not depend on $t$, then the condition $b\in \dot{\Lambda}_{\beta}(\Rn)$
is necessary for the boundedness of $M_{b,\a}$ from $L^{\Phi}(\Rn)$ to $L^{\Psi}(\Rn)$.

$3.~$ If $\Phi\in \nabla_2$ and $\Psi^{-1}(t) \thickapprox \Phi^{-1}(t)t^{-\frac{\alpha+\b}{n}}$,
then the condition $b\in \dot{\Lambda}_{\beta}(\Rn)$ is necessary and sufficient for the boundedness of $M_{b,\a}$ from $L^{\Phi}(\Rn)$ to $L^{\Psi}(\Rn)$.
\end{theorem}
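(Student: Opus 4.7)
The plan is to prove the three parts in order, with parts 1 and 2 handled by independent arguments and part 3 following as an immediate combination.

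For part 1 (sufficiency), the strategy is to reduce to the bounds already established for $M_{\alpha}$. Assuming $b \in \dot{\Lambda}_{\beta}(\Rn)$, Lemma \ref{pwslip} gives the pointwise estimate $M_{b,\alpha}f(x) \lesssim \|b\|_{\dot{\Lambda}_{\beta}(\Rn)}\, M_{\alpha+\beta}f(x)$. Condition \eqref{adFrCharOrl1M} is exactly the hypothesis \eqref{adRieszCharOrl2} of Theorem \ref{AdamsFrMaxCharOrl} with $\alpha$ replaced by $\alpha+\beta$. Since $\Phi \in \nabla_2$, Theorem \ref{AdamsFrMaxCharOrl} yields the boundedness of $M_{\alpha+\beta}$ from $L^{\Phi}(\Rn)$ to $L^{\Psi}(\Rn)$, and composing with the pointwise inequality gives the desired bound for $M_{b,\alpha}$. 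This step is essentially routine.

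For part 2 (necessity), I will use the lower pointwise bound of Lemma \ref{estFrMaxCom} together with the DeVore–Sharpley characterization from Lemma \ref{CharLipSp}. Fix a ball $B_0 = B(x_0, r_0)$. From Lemma \ref{estFrMaxCom}, for all $x \in B_0$,
$$
|B_0|^{\alpha/n}|b(x)-b_{B_0}| \leq M_{b,\alpha}\chi_{B_0}(x).
$$
Integrating over $B_0$ and then applying H\"older's inequality (Theorem \ref{HolderOr}) on the right-hand side,
$$
|B_0|^{\alpha/n}\int_{B_0}|b(x)-b_{B_0}|\,dx \leq \int_{B_0}M_{b,\alpha}\chi_{B_0}(x)\,dx \lesssim \|M_{b,\alpha}\chi_{B_0}\|_{L^{\Psi}(B_0)}\,\|\chi_{B_0}\|_{L^{\widetilde{\Psi}}(B_0)}.
$$
Lemma \ref{charorlc} combined with \eqref{2.3} gives $\|\chi_{B_0}\|_{L^{\widetilde{\Psi}}(B_0)} \approx |B_0|\,\Psi^{-1}(|B_0|^{-1})$. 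The assumed boundedness of $M_{b,\alpha}$ from $L^{\Phi}(\Rn)$ to $L^{\Psi}(\Rn)$ yields $\|M_{b,\alpha}\chi_{B_0}\|_{L^{\Psi}(B_0)} \leq \|M_{b,\alpha}\chi_{B_0}\|_{L^{\Psi}} \lesssim \|\chi_{B_0}\|_{L^{\Phi}} = 1/\Phi^{-1}(|B_0|^{-1})$. Combining these bounds,
$$
\int_{B_0}|b(x)-b_{B_0}|\,dx \lesssim |B_0|^{1-\alpha/n}\,\frac{\Psi^{-1}(|B_0|^{-1})}{\Phi^{-1}(|B_0|^{-1})},
$$
and invoking \eqref{adFrCharOrl1Mncs} to dominate the last ratio by $|B_0|^{(\alpha+\beta)/n}$ produces
$$
\frac{1}{|B_0|^{1+\beta/n}}\int_{B_0}|b(x)-b_{B_0}|\,dx \lesssim 1,
$$
uniformly in $B_0$. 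Taking the supremum over all balls and applying Lemma \ref{CharLipSp} gives $b \in \dot{\Lambda}_{\beta}(\Rn)$.

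Part 3 follows immediately: the equivalence $\Psi^{-1}(t)\approx \Phi^{-1}(t)t^{-(\alpha+\beta)/n}$ combines \eqref{adFrCharOrl1M} and \eqref{adFrCharOrl1Mncs}, so parts 1 and 2 apply simultaneously. The main technical point is the bookkeeping in part 2, particularly the transition from $\|\chi_{B_0}\|_{L^{\widetilde{\Psi}}(B_0)}$ to an expression involving $\Psi^{-1}$ via \eqref{2.3}; once that identity is in hand, the chain of inequalities assembles cleanly.
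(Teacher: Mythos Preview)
Your proof is correct and follows essentially the same route as the paper: part~1 via Lemma~\ref{pwslip} and Theorem~\ref{AdamsFrMaxCharOrl}, part~2 via the pointwise lower bound of Lemma~\ref{estFrMaxCom}, the H\"older-type estimate, the boundedness hypothesis on $\chi_{B_0}$, and Lemma~\ref{CharLipSp}. The only cosmetic difference is that the paper cites Lemma~\ref{lemHold} directly where you unfold its proof (H\"older plus Lemma~\ref{charorlc} plus \eqref{2.3}), and the paper carries out the content of Lemma~\ref{estFrMaxCom} inline rather than citing it.
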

\begin{proof}
(1) The first statement of the theorem follows from Theorem \ref{AdamsFrMaxCharOrl} and Lemma \ref{pwslip}.

(2) We shall now prove the second part. Suppose that $\Psi^{-1}(t) \lesssim \Phi^{-1}(t)t^{-(\a+\b)/n}$
and $M_{b,\a}$ is bounded from $L^{\Phi}(\Rn)$ to $L^{\Psi}(\Rn)$. Choose any ball $B$ in $\Rn$, by Lemmas \ref{charorlc} and \ref{lemHold}
\begin{align*}
\frac{1}{|B|^{1+\frac{\b}{n}}} \int_{B}|b(y)-b_{B}|dy & = \frac{1}{|B|^{1+\frac{\a+\b}{n}}}
\int_{B} \Big|\frac{1}{|B|^{1-\frac{\a}{n}}} \int_{B} (b(y)-b(z))dz \Big| dy \notag
\\
& \le \frac{1}{|B|^{1+\frac{\a+\b}{n}}} \int_{B} M_{b,a}\big( \chi_{_B}\big)(y) dy \notag
\\
& \le \frac{2\Psi^{-1}(|B|^{-1})}{|B|^{\frac{\a+\b}{n}}} \, \|M_{b,\a}\big( \chi_{_B}\big)\|_{L^{\Psi}(B)} \notag
\\
& \le \frac{C}{|B|^{\frac{\a+\b}{n}}} \, \frac{\Psi^{-1}(|B|^{-1})}{\Phi^{-1}(|B|^{-1})} \leq C.
\end{align*}
Thus by Lemma \ref{CharLipSp} we get $b\in \dot{\Lambda}_{\beta}(\Rn)$.

(3) The third statement of the theorem follows from the first and second parts of the theorem.
\end{proof}

If we take $\a=0$ at Theorem \ref{CommFrMaxCharOr01}, we have the following result.
\begin{corollary} \label{BBCommFrMaxCharOr01}
Let $0<\beta<1$, $b\in  L^1_{\rm loc}(\Rn)$, $\Phi, \Psi$ be Young functions and $\Phi\in\mathcal{Y}$.

$1.~$ If $\Phi\in \nabla_2$ and the condition $\Phi^{-1}(t)t^{-\beta/n} \lesssim \Psi^{-1}(t)$ holds,
then the condition $b\in \dot{\Lambda}_{\beta}(\Rn)$ is sufficient for the boundedness of $M_{b}$ from $L^{\Phi}(\Rn)$ to $L^{\Psi}(\Rn)$.

$2.~$ If $\Psi^{-1}(t) \lesssim \Phi^{-1}(t)t^{-\beta/n}$, then the condition $b\in \dot{\Lambda}_{\beta}(\Rn)$ is necessary for the boundedness of $M_{b}$ from $L^{\Phi}(\Rn)$ to $L^{\Psi}(\Rn)$.

$3.~$ If $\Phi\in \nabla_2$ and $\Psi^{-1}(t) \thickapprox \Phi^{-1}(t)t^{-\beta/n}$,
then the condition $b\in \dot{\Lambda}_{\beta}(\Rn)$ is necessary and sufficient for the boundedness of $M_{b}$ from $L^{\Phi}(\Rn)$ to $L^{\Psi}(\Rn)$.
\end{corollary}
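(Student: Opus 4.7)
The plan is to obtain this corollary as a direct specialization of Theorem \ref{CommFrMaxCharOr01} by setting $\alpha=0$. First I would verify that the substitution is legitimate: the hypotheses of Theorem \ref{CommFrMaxCharOr01} require $0<\beta<1$, $0\le\alpha<n$, and $0<\alpha+\beta<n$. With $\alpha=0$ these reduce to $0<\beta<1<n$, which is automatic for every dimension $n\ge 1$, so the theorem applies verbatim. Under this choice, the operator $M_{b,\alpha}$ defined in the introduction becomes $M_{b,0}=M_b$, which is exactly the maximal commutator appearing in the corollary.

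Next I would read off the three cases. With $\alpha=0$ the exponent $\tfrac{\alpha+\beta}{n}$ in the hypotheses \eqref{adFrCharOrl1M} and \eqref{adFrCharOrl1Mncs} of Theorem \ref{CommFrMaxCharOr01} collapses to $\tfrac{\beta}{n}$. Thus condition \eqref{adFrCharOrl1M} becomes $\Phi^{-1}(t)t^{-\beta/n}\lesssim\Psi^{-1}(t)$, which is precisely the hypothesis of part (1) of the corollary. Similarly condition \eqref{adFrCharOrl1Mncs} becomes $\Psi^{-1}(t)\lesssim\Phi^{-1}(t)t^{-\beta/n}$, the hypothesis of part (2). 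The two-sided condition in part (3) is the combination of these, matching the hypothesis of the third statement of Theorem \ref{CommFrMaxCharOr01}. Applying the corresponding part of that theorem in each case immediately delivers the sufficiency, necessity, and characterization asserted in parts (1), (2), and (3) respectively.

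There is no real obstacle here since the corollary is nothing more than the $\alpha=0$ instance of the preceding theorem; the only point worth flagging explicitly is that Lemma \ref{pwslip} (used in the sufficiency direction of Theorem \ref{CommFrMaxCharOr01}) also permits $\alpha=0$ in its statement, so the chain of deductions goes through without any boundary case to check separately. Hence the proof proposal reduces to a two-line argument: invoke Theorem \ref{CommFrMaxCharOr01} with $\alpha=0$ and identify the resulting conditions with those of the corollary.
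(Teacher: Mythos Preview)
Your proposal is correct and matches the paper exactly: the corollary is introduced there simply with the line ``If we take $\alpha=0$ at Theorem \ref{CommFrMaxCharOr01}, we have the following result,'' and no separate proof is given. The only minor slip is the phrasing ``$0<\beta<1<n$''---the specialized constraint is $0<\beta<n$, which is automatic since $\beta<1\le n$ for every $n\ge 1$---but this does not affect the argument.
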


If we take $\Phi(t)=t^{p}$ and $\Psi(t)=t^{q}$ with $1\le p<\i$ and $1\le q \le\i$ at Theorem \ref{CommFrMaxCharOr01}, we have the following result.
\begin{corollary}\label{ZhLpCo}
Let $0<\beta<1$, $0\le \a<n$, $0< \a+\b<n$, $b\in  L^1_{\rm loc}(\Rn)$, $1< p<q\le \i$ and $\frac{1}{p}-\frac{1}{q}=\frac{\a+\b}{n}$. Then the condition $b\in \dot{\Lambda}_{\beta}(\Rn)$ is necessary and sufficient for the boundedness of $M_{b,\a}$ from $L^{p}(\Rn)$ to $L^{q}(\Rn)$.
\end{corollary}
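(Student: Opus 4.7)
The plan is to derive this corollary as a direct specialization of part (3) of Theorem \ref{CommFrMaxCharOr01}, by taking $\Phi(t)=t^p$ and $\Psi(t)=t^q$ and checking that all hypotheses reduce to the stated assumptions.

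First I would verify the abstract Young function hypotheses. For $1<p<\infty$ the function $\Phi(t)=t^p$ lies in $\mathcal{Y}$ (it is strictly positive and finite on $(0,\infty)$) and it satisfies the $\nabla_2$-condition, since $\Phi(Ct)=C^p\Phi(t)$ so that any $C>2^{1/(p-1)}$ works. For $1\le q<\infty$ the function $\Psi(t)=t^q$ is likewise a Young function; the limiting case $q=\infty$ corresponds to the Young function defined to be $0$ on $[0,1]$ and $\infty$ on $(1,\infty)$, whose Orlicz space is $L^\infty(\Rn)$.

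Next I would unwind the scaling condition in hypothesis (3) of Theorem \ref{CommFrMaxCharOr01}. With these choices $\Phi^{-1}(t)=t^{1/p}$ and $\Psi^{-1}(t)=t^{1/q}$ (with the convention $\Psi^{-1}(t)\equiv 1$ on $(0,\infty)$ when $q=\infty$, consistent with the definition in Section 2). The equivalence $\Psi^{-1}(t)\approx \Phi^{-1}(t)\,t^{-(\alpha+\beta)/n}$ then reads
\[
t^{1/q}\approx t^{\,1/p-(\alpha+\beta)/n}\qquad\text{for all }t>0,
\]
which is valid (in fact holds with equality) if and only if $\tfrac{1}{p}-\tfrac{1}{q}=\tfrac{\alpha+\beta}{n}$, exactly the assumption of the corollary. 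The range $1<p<q\le\infty$ together with $0<\alpha+\beta<n$ and $\beta>0$ is compatible with this identity and, crucially, ensures $p>1$, so the $\nabla_2$-hypothesis on $\Phi$ is available.

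The conclusion then follows at once from Theorem \ref{CommFrMaxCharOr01}(3): the sufficiency direction supplies the boundedness of $M_{b,\alpha}$ from $L^p(\Rn)$ to $L^q(\Rn)$ whenever $b\in\dot{\Lambda}_\beta(\Rn)$, while the necessity direction gives the converse. I do not foresee any substantive obstacle; the only mild bookkeeping is the treatment of $q=\infty$, which is handled uniformly by interpreting $t^{1/q}$ as $1$ and $L^q(\Rn)$ as $L^\infty(\Rn)$, with Lemma \ref{charorlc} and the definition of $\Psi^{-1}$ adapted accordingly.
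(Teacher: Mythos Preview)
Your proposal is correct and follows exactly the same approach as the paper: the corollary is obtained by specializing Theorem \ref{CommFrMaxCharOr01}(3) to $\Phi(t)=t^p$ and $\Psi(t)=t^q$, and you have simply spelled out the routine verifications (that $\Phi\in\mathcal{Y}\cap\nabla_2$ when $p>1$ and that the scaling condition $\Psi^{-1}(t)\approx\Phi^{-1}(t)t^{-(\alpha+\beta)/n}$ reduces to $\tfrac{1}{p}-\tfrac{1}{q}=\tfrac{\alpha+\beta}{n}$) that the paper leaves implicit.
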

\begin{remark}
For $\a=0$, Corollary \ref{ZhLpCo} was proved in \cite{ZhArxiv}.
\end{remark}

The following theorem is valid.
\begin{theorem} \label{CommFrMaxCharOr01W}
Let $0<\beta<1$, $0\le \a<n$, $0< \a+\b<n$, $b\in  L^1_{\rm loc}(\Rn)$, $\Phi, \Psi$ be Young functions and $\Phi\in\mathcal{Y}$.

$1.~$ If condition \eqref{adFrCharOrl1M} holds,
then the condition $b\in \dot{\Lambda}_{\beta}(\Rn)$ is sufficient for the boundedness of $M_{b,\a}$ from $L^{\Phi}(\Rn)$ to $WL^{\Psi}(\Rn)$.

$2.~$ If condition \eqref{adFrCharOrl1Mncs} holds and $\frac{t^{1+\varepsilon}}{\Psi(t)}$ is almost decreasing for some $\varepsilon>0$, then the condition $b\in \dot{\Lambda}_{\beta}(\Rn)$ is necessary for the boundedness of $M_{b,\a}$ from $L^{\Phi}(\Rn)$ to $WL^{\Psi}(\Rn)$.

$3.~$ If $\Psi^{-1}(t) \thickapprox \Phi^{-1}(t)t^{-(\a+\b)/n}$ and $\frac{t^{1+\varepsilon}}{\Psi(t)}$ is almost decreasing for some $\varepsilon>0$,
then the condition $b\in \dot{\Lambda}_{\beta}(\Rn)$ is necessary and sufficient for the boundedness of $M_{b,\a}$ from $L^{\Phi}(\Rn)$ to $WL^{\Psi}(\Rn)$.
\end{theorem}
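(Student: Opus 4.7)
My plan is to handle the three parts in sequence, since Part~3 is formally the conjunction of Parts~1 and~2. For Part~1, I will apply the pointwise bound of Lemma \ref{pwslip}, namely $M_{b,\a}f(x)\lesssim\|b\|_{\dot{\Lambda}_{\beta}(\Rn)} M_{\a+\b}f(x)$, and then invoke the weak-type assertion of Theorem \ref{AdamsFrMaxCharOrl} with $\a$ replaced by $\a+\b$; the needed hypothesis is exactly condition \eqref{adFrCharOrl1M}. Composing these two steps gives the weak $L^\Phi\to WL^\Psi$ bound for $M_{b,\a}$.

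For Part~2, which is the heart of the matter, the strategy mirrors the proof of Theorem \ref{CommFrMaxCharOr01}(2). Starting from Lemma \ref{estFrMaxCom}, I bound
\begin{equation*}
\frac{1}{|B|^{1+\b/n}}\int_B |b(y)-b_B|\,dy \le \frac{1}{|B|^{1+(\a+\b)/n}}\int_B M_{b,\a}\chi_{_B}(y)\,dy,
\end{equation*}
and the goal is then to control the last integral by the weak $L^\Psi(B)$ norm of $M_{b,\a}\chi_{_B}$, apply the boundedness hypothesis, use Lemma \ref{charorlc} for $\|\chi_{_B}\|_{L^\Phi}$, and close using \eqref{adFrCharOrl1Mncs} and Lemma \ref{CharLipSp}. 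The one nontrivial step is to replace Lemma \ref{lemHold} by a weak-norm analog: for every ball $B$ and every measurable $g$,
\begin{equation*}
\int_B |g(y)|\,dy \le C\,|B|\,\Psi^{-1}(|B|^{-1})\,\|g\|_{WL^{\Psi}(B)}.
\end{equation*}

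I will prove this key inequality by normalizing $\|g\|_{WL^{\Psi}(B)}=1$, so that $m(B,g,t)\le \min(|B|,1/\Psi(t))$, and splitting the layer-cake representation at height $T=\Psi^{-1}(|B|^{-1})$. The low part contributes at most $T|B|=|B|\Psi^{-1}(|B|^{-1})$; for the tail $\int_T^\infty dt/\Psi(t)$, the almost-decreasing hypothesis on $t^{1+\varepsilon}/\Psi(t)$ yields $\Psi(t)\gtrsim (t/T)^{1+\varepsilon}\Psi(T)$ for $t\ge T$, and a direct computation then bounds the tail by a constant multiple of $T/\Psi(T)=|B|\Psi^{-1}(|B|^{-1})$. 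Thus the main obstacle is essentially this one-variable estimate on $\Psi$; once it is settled, Part~2 proceeds as sketched, and Part~3 is immediate by combining Parts~1 and~2, since $\Psi^{-1}(t)\approx \Phi^{-1}(t)t^{-(\a+\b)/n}$ implies both \eqref{adFrCharOrl1M} and \eqref{adFrCharOrl1Mncs}.
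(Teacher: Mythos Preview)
Your proposal is correct and follows essentially the same route as the paper. For Part~1 and Part~3 the arguments coincide verbatim. For Part~2 the paper also uses Lemma~\ref{estFrMaxCom} and a layer-cake split with the almost-decreasing hypothesis on $t^{1+\varepsilon}/\Psi(t)$ to control the tail; the only difference is organizational: the paper applies the weak-type bound directly to the distribution function of $|b(\cdot)-b_{B_0}|$ on $B_0$ and chooses the split height $t=C|B_0|^{(\alpha+\beta)/n}$, whereas you first package the same computation as a standalone weak-H\"older inequality $\int_B|g|\lesssim |B|\,\Psi^{-1}(|B|^{-1})\,\|g\|_{WL^\Psi(B)}$ and then apply it to $g=M_{b,\alpha}\chi_{_B}$. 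Both lead to the same estimate and close via \eqref{adFrCharOrl1Mncs} and Lemma~\ref{CharLipSp}.
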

\begin{proof}
(1) The first statement of the theorem follows from Theorem \ref{AdamsFrMaxCharOrl} and Lemma \ref{pwslip}.

(2) For any fixed ball $B_0$ such that $x\in B_0$ by Lemma \ref{estFrMaxCom} we have $|B_0|^{\alpha/n}|b(x)-b_{B_0}|\leq M_{b,\a} \chi_{B_0}(x)$.
This together with the boundedness of $M_{b,\a}$ from $L^{\Phi}(\Rn)$ to $WL^{\Psi}(\Rn)$ and Lemma \ref{charorlc}
\begin{align*}
  |\{x\in B_0: |B_0|^{\alpha/n}|b(x)-b_{B_0}|> \lambda\}| &\leq |\{x\in B_0: M_{b,\a} \chi_{B_0}(x)> \lambda\}|
  \\
  & \leq \frac{1}{\Psi\left(\frac{ \lambda}{C\|\chi_{B_0}\|_{L^{\Phi}}}\right)} = \frac{1}{\Psi\left(\frac{ \lambda \Phi^{-1}(|B_0|^{-1})}{C}\right)}.
\end{align*}

Let $t>0$ be a constant to be determined later, then
\begin{align*}
  \int_{B_0} |b(x)-b_{B_0}|dx & = |B_0|^{-\alpha/n}\int_{0}^{\infty} |\{x\in B_0: |b(x)-b_{B_0}|>|B_0|^{-\alpha/n}\lambda\}| d\lambda\\
  & = |B_0|^{-\alpha/n}\int_{0}^{t} \{x\in B_0: |b(x)-b_{B_0}|>|B_0|^{-\alpha/n}\lambda\}| d\lambda\\
  & ~~~~+ |B_0|^{-\alpha/n}\int_{t}^{\infty} |\{x\in B_0: |b(x)-b_{B_0}|>|B_0|^{-\alpha/n}\lambda\}| d\lambda \\
  & \le t|B_0|^{1-\alpha/n}+ |B_0|^{-\alpha/n}\int_{t}^{\infty} \frac{1}{\Psi\left(\frac{ \lambda \Phi^{-1}(|B_0|^{-1})}{C}\right)} d\lambda\\
  & \lesssim  t|B_0|^{1-\alpha/n}+\frac{|B_0|^{-\alpha/n}t}{\Psi\left(\frac{ t \Phi^{-1}(|B_0|^{-1})}{C}\right)},
\end{align*}
where we use almost decreasingness of $\frac{t^{1+\varepsilon}}{\Psi(t)}$ in the last step.

Set $t=C|B_0|^{\frac{\alpha+\beta}{n}}$ in the above estimate, we have
$$
\int_{B_0} |b(x)-b_{B_0}|dx\lesssim |B_0|^{1+\b/n}.
$$
Thus by Lemma \ref{CharLipSp} we get $b\in \dot{\Lambda}_{\beta}(\Rn)$ since $B_0$ is an arbitrary ball in $\Rn$.

(3) The third statement of the theorem follows from the first and second parts of the theorem.
\end{proof}

If we take $\a=0$ at Theorem \ref{CommFrMaxCharOr01W}, we have the following result.

\begin{corollary} \label{BBCommFrMaxCharOr01W}
Let $0<\beta<1$, $b\in  L^1_{\rm loc}(\Rn)$, $\Phi, \Psi$ be Young functions and $\Phi\in\mathcal{Y}$.

$1.~$ If the condition $\Phi^{-1}(t)t^{-\beta/n} \lesssim \Psi^{-1}(t)$ holds,
then the condition $b\in \dot{\Lambda}_{\beta}(\Rn)$ is sufficient for the boundedness of $M_{b}$ from $L^{\Phi}(\Rn)$ to $WL^{\Psi}(\Rn)$.

$2.~$ If $\Psi^{-1}(t) \lesssim \Phi^{-1}(t)t^{-\beta/n}$ and $\frac{t^{1+\varepsilon}}{\Psi(t)}$ is almost decreasing for some $\varepsilon>0$, then the condition $b\in \dot{\Lambda}_{\beta}(\Rn)$ is necessary for the boundedness of $M_{b}$ from $L^{\Phi}(\Rn)$ to $WL^{\Psi}(\Rn)$.

$3.~$ If $\Psi^{-1}(t) \thickapprox \Phi^{-1}(t)t^{-\beta/n}$ and $\frac{t^{1+\varepsilon}}{\Psi(t)}$ is almost decreasing for some $\varepsilon>0$,
then the condition $b\in \dot{\Lambda}_{\beta}(\Rn)$ is necessary and sufficient for the boundedness of $M_{b}$ from $L^{\Phi}(\Rn)$ to $WL^{\Psi}(\Rn)$.
\end{corollary}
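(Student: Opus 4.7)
The plan is to obtain this corollary as an immediate specialization of Theorem \ref{CommFrMaxCharOr01W} with $\a=0$. First I would observe that when $\a=0$, by the very definition given in the introduction, the fractional maximal commutator $M_{b,\a}$ coincides with the maximal commutator $M_{b}$, i.e.\ $M_{b}=M_{b,0}$. I would also check that the standing hypotheses $0\le\a<n$ and $0<\a+\b<n$ of Theorem \ref{CommFrMaxCharOr01W} are satisfied with $\a=0$: the first is trivial, while the second reduces to $0<\b<1<n$, which holds since $0<\b<1$.

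Next I would specialize the two key conditions from Theorem \ref{CommFrMaxCharOr01W}. Setting $\a=0$, condition \eqref{adFrCharOrl1M}, namely $t^{-(\a+\b)/n}\Phi^{-1}(t)\le C\Psi^{-1}(t)$, becomes $\Phi^{-1}(t)t^{-\b/n}\lesssim\Psi^{-1}(t)$, which is exactly the sufficient-side hypothesis in part $1$ of the corollary. Likewise, \eqref{adFrCharOrl1Mncs} becomes $\Psi^{-1}(t)\lesssim\Phi^{-1}(t)t^{-\b/n}$, matching the necessary-side hypothesis in part $2$. The almost-decreasingness condition on $t^{1+\varepsilon}/\Psi(t)$ is carried over verbatim.

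With these reductions in place, the three statements of the corollary follow directly: part $1$ from the sufficiency statement of Theorem \ref{CommFrMaxCharOr01W}, part $2$ from its necessity statement (which is where the almost-decreasingness is used via the layer-cake/truncation argument in the proof), and part $3$ by combining them, just as in the theorem. No separate argument is required.

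I do not expect any genuine obstacle here, since the corollary is a bookkeeping consequence of Theorem \ref{CommFrMaxCharOr01W}; the only thing to verify carefully is that the substitution $\a=0$ is legitimate in all three places (the operator identification $M_{b,0}=M_b$, the range conditions on $\a,\b$, and the exponent $\a+\b$ reducing to $\b$ in \eqref{adFrCharOrl1M} and \eqref{adFrCharOrl1Mncs}), all of which are immediate.
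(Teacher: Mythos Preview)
Your proposal is correct and matches the paper's own treatment exactly: the paper states this corollary immediately after Theorem \ref{CommFrMaxCharOr01W} with the single remark ``If we take $\a=0$ at Theorem \ref{CommFrMaxCharOr01W}, we have the following result,'' and provides no separate proof. Your verification of the operator identification $M_{b,0}=M_b$, the range conditions, and the specialization of \eqref{adFrCharOrl1M} and \eqref{adFrCharOrl1Mncs} is precisely the bookkeeping that justifies this one-line deduction.
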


If we take $\Phi(t)=t^{p}$ and $\Psi(t)=t^{q}$ with $1\le p<\i$ and $1\le q \le\i$ at Theorem \ref{CommFrMaxCharOr01W}, we have the following result.
\begin{corollary}\label{ZhLpCoW}
Let $0<\beta<1$, $0\le \a<n$, $0< \a+\b<n$, $b\in  L^1_{\rm loc}(\Rn)$, $1\le p<q\le\i$ and $\frac{1}{p}-\frac{1}{q}=\frac{\a+\b}{n}$. Then the condition $b\in \dot{\Lambda}_{\beta}(\Rn)$ is necessary and sufficient for the boundedness of $M_{b,\a}$ from $L^{p}(\Rn)$ to $WL^{q}(\Rn)$.
\end{corollary}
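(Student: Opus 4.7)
The strategy is to specialize part 3 of Theorem \ref{CommFrMaxCharOr01W} to the power Young functions $\Phi(t)=t^p$ and $\Psi(t)=t^q$, for which $\Phi^{-1}(t)=t^{1/p}$ and $\Psi^{-1}(t)=t^{1/q}$. With this choice, the equivalence $\Psi^{-1}(t)\thickapprox \Phi^{-1}(t)\,t^{-(\a+\b)/n}$ reduces to the identity $t^{1/q}=t^{1/p-(\a+\b)/n}$ for all $t>0$, which holds if and only if $\frac{1}{q}=\frac{1}{p}-\frac{\a+\b}{n}$, precisely the arithmetic relation in the hypothesis.

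Next I would verify the remaining hypothesis of Theorem \ref{CommFrMaxCharOr01W}(3), namely that $t^{1+\varepsilon}/\Psi(t)=t^{1+\varepsilon-q}$ is almost decreasing for some $\varepsilon>0$. From $\frac{1}{p}-\frac{1}{q}=\frac{\a+\b}{n}>0$ and $p\ge 1$ one gets $q>1$, so any $\varepsilon\in(0,q-1)$ makes the exponent $1+\varepsilon-q$ negative and the function literally (hence almost) decreasing. Since $\Phi(t)=t^p\in\mathcal{Y}$ as well, all assumptions of Theorem \ref{CommFrMaxCharOr01W}(3) are met, and the equivalence of $b\in\dot{\Lambda}_{\beta}(\Rn)$ with the $L^p\to WL^q$ boundedness of $M_{b,\a}$ follows whenever $q<\infty$.

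The delicate point I expect to be the main obstacle is the borderline case $q=\infty$, i.e. $p=n/(\a+\b)$, where $\Psi$ is not a genuine power and Theorem \ref{CommFrMaxCharOr01W} does not apply verbatim, since $\Psi^{-1}$ degenerates and the almost-decreasing hypothesis becomes vacuous/ill-defined. For sufficiency I would combine Lemma \ref{pwslip} with the elementary bound $M_{\a+\b}\colon L^{n/(\a+\b)}(\Rn)\to L^{\infty}(\Rn)$, which is immediate from H\"older's inequality inside the average defining $M_{\a+\b}$. For necessity I would use Lemma \ref{estFrMaxCom} to deduce
$$
\|b-b_{B_0}\|_{L^\infty(B_0)}\le |B_0|^{-\a/n}\|M_{b,\a}\chi_{B_0}\|_{L^\infty}\lesssim |B_0|^{-\a/n}\|\chi_{B_0}\|_{L^{n/(\a+\b)}}\lesssim |B_0|^{\beta/n},
$$
and then invoke Lemma \ref{CharLipSp} to conclude $b\in\dot{\Lambda}_{\beta}(\Rn)$, thereby covering the endpoint case outside the scope of Theorem \ref{CommFrMaxCharOr01W}.
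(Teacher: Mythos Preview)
Your proposal is correct and follows exactly the approach the paper indicates: specialize Theorem~\ref{CommFrMaxCharOr01W} to $\Phi(t)=t^p$, $\Psi(t)=t^q$. The paper gives no further argument beyond that one-line reduction, so for $q<\infty$ your proof coincides with the paper's.

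Your separate treatment of the endpoint $q=\infty$ is a genuine addition rather than a different route. The paper simply asserts that the specialization covers $1\le q\le\infty$, but as you correctly observe, when $q=\infty$ the Young function for $L^\infty$ is $\Psi(r)=0$ for $r\le 1$ and $\Psi(r)=\infty$ for $r>1$, so the ratio $t^{1+\varepsilon}/\Psi(t)$ is not a well-defined finite function and the hypothesis of Theorem~\ref{CommFrMaxCharOr01W}(2) is not literally verifiable. Your direct argument via Lemma~\ref{pwslip} together with the elementary bound $M_{\alpha+\beta}:L^{n/(\alpha+\beta)}\to L^\infty$ for sufficiency, and Lemma~\ref{estFrMaxCom} plus Lemma~\ref{CharLipSp} for necessity, cleanly fills this gap. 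In short: same method as the paper, but with an endpoint patch the paper omits.
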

\begin{remark}
For $\a=0$, Corollary \ref{ZhLpCoW} was proved in \cite{ZhArxiv}.
\end{remark}

To state our results, we recall the definition of the maximal operator with respect to a ball. For a fixed ball $B_0$, the fractional
maximal function with respect to $B_0$ of a function $f$ is given by
$$
M_{\alpha, B_0}(f)(x)=\sup_{B_{0}\supseteq B\ni x}\frac{1}{|B_0|^{1-\frac{\a}{n}}}\int_{B}|f(y)|dy,\qquad 0\le \a<n,
$$
where the supremum is taken over all the balls $B$ with $B\subseteq B_0$ and $x\in B$.

\begin{theorem}\label{dnndk}
Let $0<\beta<1$, $0\le \a<n$, $0< \a+\b<n$ and $b$ be a locally integrable non-negative function. Suppose that $\Phi, \Psi$ be Young functions, $\Phi\in\mathcal{Y}\cap\nabla_2$ and $\Psi^{-1}(t) \thickapprox \Phi^{-1}(t)t^{-\frac{\a+\b}{n}}$. Then the following statements are equivalent:

$1.~$ $b\in \dot{\Lambda}_{\beta}(\Rn)$.

$2.~$ $[b,M_\alpha]$ is bounded from $L^{\Phi}(\Rn)$ to $L^{\Psi}(\Rn)$.

$3.~$ There exists a constant $C > 0$ such that
\begin{equation}\label{maxbalfx}
\sup_{B} |B|^{-\beta/n}\Psi^{-1}\big(|B|^{-1}\big)\|b(\cdot)-|B|^{-\a/n}M_{\a,B}(b)(\cdot)\|_{L^{\Psi}(B)}\le C.
\end{equation}
\end{theorem}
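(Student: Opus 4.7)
The plan is to prove the cycle via $(1) \Rightarrow (2) \Rightarrow (1)$ together with $(1) \Leftrightarrow (3)$, which closes all implications.

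For $(1) \Rightarrow (2)$, I would first note that because $b \geq 0$, the inequality $b(x)|f(y)| \leq |b(x)-b(y)||f(y)| + b(y)|f(y)|$ integrated on each ball $B \ni x$ and weighted by $|B|^{-1+\alpha/n}$, then maximized, yields the pointwise domination $|[b,M_\alpha]f(x)| \leq M_{b,\alpha}f(x)$. Coupling this with Lemma \ref{pwslip} gives $|[b,M_\alpha]f(x)| \lesssim \|b\|_{\dot{\Lambda}_\beta(\Rn)}\,M_{\alpha+\beta}f(x)$, and Theorem \ref{AdamsFrMaxCharOrl} applied with parameter $\alpha+\beta$ (its hypothesis $\Psi^{-1}(t) \thickapprox \Phi^{-1}(t)t^{-(\alpha+\beta)/n}$ with $\Phi \in \nabla_2$ is exactly what we have) then closes this direction.

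For $(2) \Rightarrow (1)$, which is the main technical step, I would test on $\chi_{B_0}$. For $x \in B_0$, a short case analysis on balls $B \ni x$ gives $M_\alpha(\chi_{B_0})(x) = |B_0|^{\alpha/n}$ exactly, while the choice $B = B_0$ in the defining supremum forces $M_\alpha(b\chi_{B_0})(x) \geq |B_0|^{\alpha/n}b_{B_0}$. Subtracting produces
\[
[b,M_\alpha]\chi_{B_0}(x) \leq |B_0|^{\alpha/n}(b(x) - b_{B_0}),
\]
hence on the sub-average set $E := \{x \in B_0 : b(x) < b_{B_0}\}$ we obtain the one-sided lower bound $|[b,M_\alpha]\chi_{B_0}(x)| \geq |B_0|^{\alpha/n}|b(x) - b_{B_0}|$. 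Taking $L^\Psi(E)$ norms, using the assumed $L^\Phi\to L^\Psi$ boundedness of $[b,M_\alpha]$, Lemma \ref{charorlc}, and the relation $\Psi^{-1}(t) \thickapprox \Phi^{-1}(t)t^{-(\alpha+\beta)/n}$ produces $\|b - b_{B_0}\|_{L^\Psi(E)} \lesssim |B_0|^{\beta/n}/\Psi^{-1}(|B_0|^{-1})$. Theorem \ref{HolderOr}, the monotonicity $\|\chi_E\|_{L^{\widetilde\Psi}} \leq \|\chi_{B_0}\|_{L^{\widetilde\Psi}}$, and \eqref{2.3} then give $\int_E|b-b_{B_0}|\,dx \lesssim |B_0|^{1+\beta/n}$. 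Since $b-b_{B_0}$ has zero mean over $B_0$, the $L^1$ mass on $B_0\setminus E$ equals the mass on $E$, whence $\int_{B_0}|b-b_{B_0}|\,dx \leq 2\int_E|b-b_{B_0}|\,dx \lesssim |B_0|^{1+\beta/n}$, and Lemma \ref{CharLipSp} delivers $b \in \dot{\Lambda}_\beta(\Rn)$.

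For $(1) \Leftrightarrow (3)$, I first rewrite (3) cleanly: since $b \geq 0$ and the sup defining $M_{\alpha,B}(b)(x)$ over balls $B' \subseteq B$ containing $x$ is attained at $B' = B$, one has $|B|^{-\alpha/n}M_{\alpha,B}(b)(x) = b_B$ for every $x \in B$, and (3) becomes the Campanato-type estimate $\sup_B |B|^{-\beta/n}\Psi^{-1}(|B|^{-1})\|b - b_B\|_{L^\Psi(B)} \leq C$. The implication $(1) \Rightarrow (3)$ is immediate from the pointwise bound $|b(y)-b_B| \leq C\|b\|_{\dot{\Lambda}_\beta(\Rn)}|B|^{\beta/n}$ on $y \in B$ (direct from Definition \ref{deflip}) combined with Lemma \ref{charorlc}, while $(3) \Rightarrow (1)$ is a repetition of the Theorem \ref{HolderOr}/\eqref{2.3}/Lemma \ref{CharLipSp} chain used at the end of $(2) \Rightarrow (1)$.

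The principal obstacle is in $(2) \Rightarrow (1)$: the test-function bound only controls $|[b,M_\alpha]\chi_{B_0}|$ from below on the sub-average set $E$, never on its complement. Circumventing this by exploiting the mean-zero symmetry of $b - b_{B_0}$ (equal $L^1$ mass on $\{b<b_{B_0}\}$ and $\{b>b_{B_0}\}$) so that the $L^1$ piece on $E$ already controls the full $L^1$ mass on $B_0$ is what makes the Lemma \ref{CharLipSp} $L^1$ Lipschitz characterization the right target, rather than any direct $L^\Psi$ estimate.
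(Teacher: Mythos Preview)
Your $(1)\Rightarrow(2)$ matches the paper, and your direct $(2)\Rightarrow(1)$ via the test $\chi_{B_0}$, the one-sided lower bound on the sub-average set $E$, and the mean-zero symmetry of $b-b_{B_0}$ is a correct and pleasant alternative to the paper's route (the paper instead closes the cycle $(1)\Rightarrow(2)\Rightarrow(3)\Rightarrow(1)$ and never proves $(2)\Rightarrow(1)$ directly).

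The genuine gap is in your handling of $(1)\Leftrightarrow(3)$. Your claim that the supremum defining $M_{\alpha,B}(b)(x)$ is attained at $B'=B$, so that $|B|^{-\alpha/n}M_{\alpha,B}(b)(x)=b_B$ for every $x\in B$, is false for the intended definition of the localized maximal function. The displayed definition in the paper has a typo in the denominator: it should read $|B|^{1-\alpha/n}$ with $B$ the variable ball, not $|B_0|^{1-\alpha/n}$; this is forced by the identity $M_\alpha(b\chi_{B_0})(x)=M_{\alpha,B_0}(b)(x)$ for $x\in B_0$ that the paper quotes from \cite{ZhLWu} and uses in its own $(2)\Rightarrow(3)$. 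With the correct definition, $M_{\alpha,B}(b)(x)$ genuinely varies with $x$ and can exceed $|B|^{\alpha/n}b_B$: take $b=\chi_{B''}$ for a tiny ball $B''\subset B$ and $x\in B''$; then the choice $B'=B''$ gives $|B''|^{\alpha/n}$, while $B'=B$ gives $|B|^{-1+\alpha/n}|B''|$, and the former is larger by the factor $(|B|/|B''|)^{1-\alpha/n}>1$. Consequently your reduction of (3) to the Campanato condition $\sup_B|B|^{-\beta/n}\Psi^{-1}(|B|^{-1})\|b-b_B\|_{L^\Psi(B)}\le C$ is invalid, and both directions of your $(1)\Leftrightarrow(3)$ collapse.

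The paper's repair is as follows. For $(2)\Rightarrow(3)$ one uses the exact identity $b(x)-|B|^{-\alpha/n}M_{\alpha,B}(b)(x)=|B|^{-\alpha/n}[b,M_\alpha]\chi_B(x)$ on $B$ (your own test-function computation, once one replaces the inequality $M_\alpha(b\chi_B)(x)\ge|B|^{\alpha/n}b_B$ by the identity $M_\alpha(b\chi_B)(x)=M_{\alpha,B}(b)(x)$), takes $L^\Psi(B)$ norms, applies the assumed boundedness, and uses Lemma~\ref{charorlc} together with $\Psi^{-1}(t)\approx\Phi^{-1}(t)t^{-(\alpha+\beta)/n}$. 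For $(3)\Rightarrow(1)$ the paper invokes the pointwise-in-$L^1$ inequality from \cite{ZhWu},
\[
\int_B|b(x)-b_B|\,dx\le 2\int_B\bigl|b(x)-|B|^{-\alpha/n}M_{\alpha,B}(b)(x)\bigr|\,dx,
\]
and then applies Lemma~\ref{lemHold} and Lemma~\ref{CharLipSp} exactly as in your final paragraph. With these two replacements your cycle closes.
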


\begin{proof}
$(1)\Rightarrow (2)$: The following estimate was proved in \cite{ZhWu}. Let $b$ be any non-negative locally integrable function. Then
\begin{equation}\label{3.1gogmus}
|[b,M_\alpha](f)(x)|\leq M_{b,\a}(f)(x),\qquad x\in\Rn
\end{equation}
holds for all $f \in L^1_{\rm loc}(\Rn)$.

It follows from \eqref{3.1gogmus} and Theorem \ref{CommFrMaxCharOr01} that $[b,M_\a]$ is bounded from $L^{\Phi}(\Rn)$ to $L^{\Psi}(\Rn)$ since $b\in \dot{\Lambda}_{\beta}(\Rn)$.

$(2)\Rightarrow (3)$: For any fixed ball $B \subset \Rn$ and all $x \in B$, we have (see (2.4) in \cite{ZhLWu}).
$$
M_{\a}(\chi_{B})(x)=|B|^{\a/n}\qquad \text{and} \qquad M_{\a}(b\chi_{B})(x)=M_{\a,B}(b)(x).
$$
Then,
\begin{align}\label{4.2arxiv}
  &|B|^{-\beta/n}\Psi^{-1}\big(|B|^{-1}\big)\|b(\cdot)-|B|^{-\a/n}M_{\a,B}(b)(\cdot)\|_{L^{\Psi}(B)} \notag  \\
  & = |B|^{-\frac{\a+\b}{n}}\Psi^{-1}\big(|B|^{-1}\big)\|b(\cdot)M_{\a}(\chi_{B})(\cdot)-M_{\a}(b\chi_{B})(\cdot)\|_{L^{\Psi}(B)} \notag \\
  & = |B|^{-\frac{\a+\b}{n}}\Psi^{-1}\big(|B|^{-1}\big)\|[b,M_{\a}](\chi_{B})\|_{L^{\Psi}(B)} \\
  & \leq C |B|^{-\frac{\a+\b}{n}}\Psi^{-1}\big(|B|^{-1}\big)\|\chi_{B}\|_{L^{\Phi}} \notag \\
  & \leq C \notag
\end{align}
which implies (3) since the ball $B \subset \Rn$ is arbitrary.

$(3)\Rightarrow (1)$: From \cite{ZhWu} we have,
$$
\frac{1}{|B|^{1+\frac{\beta}{n}}} \int_{B}|b(x)-b_{B}|dx\leq \frac{2}{|B|^{1+\frac{\beta}{n}}}\int_{B}|b(x)-|B|^{-\a/n}M_{\a,B}(b)(x)|dx.
$$
it follows from Lemma \ref{lemHold} and \eqref{maxbalfx} that
$$
\frac{1}{|B|^{1+\frac{\beta}{n}}} \int_{B}|b(x)-b_{B}|dx\leq \frac{4}{|B|^{\frac{\beta}{n}}}\Psi^{-1}\big(|B|^{-1}\big)\|b(\cdot)-|B|^{-\a/n}M_{\a,B}(b)(\cdot)\|_{L^{\Psi}(B)}\le C.
$$
Thus by Lemma \ref{CharLipSp} we get $b\in \dot{\Lambda}_{\beta}(\Rn)$.
\end{proof}

If we take $\a=0$ at Theorem \ref{dnndk}, we have the following result.
\begin{corollary}
Let $0<\beta<1$ and $b$ be a locally integrable non-negative function. Suppose that $\Phi, \Psi$ be Young functions, $\Phi\in\mathcal{Y}\cap\nabla_2$ and $\Psi^{-1}(t) \thickapprox \Phi^{-1}(t)t^{-\frac{\b}{n}}$. Then the following statements are equivalent:

$1.~$ $b\in \dot{\Lambda}_{\beta}(\Rn)$.

$2.~$ $[b,M]$ is bounded from $L^{\Phi}(\Rn)$ to $L^{\Psi}(\Rn)$.

$3.~$ There exists a constant $C > 0$ such that
\begin{equation*}
\sup_{B} |B|^{-\beta/n}\Psi^{-1}\big(|B|^{-1}\big)\|b(\cdot)-M_{B}(b)(\cdot)\|_{L^{\Psi}(B)}\le C.
\end{equation*}
\end{corollary}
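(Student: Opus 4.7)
My plan is to deduce the corollary as a direct specialization of Theorem \ref{dnndk} to the case $\alpha = 0$, with no fresh argument required.

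First I would verify that the hypotheses of the corollary are exactly the $\alpha = 0$ restrictions of those of Theorem \ref{dnndk}. The constraint $0 < \beta < 1$ is identical; the conditions $0 \le \alpha < n$ and $0 < \alpha + \beta < n$ both hold with $\alpha = 0$ because $\beta < 1 \le n$; the assumption $\Phi \in \mathcal{Y} \cap \nabla_2$ is unchanged; and the equivalence $\Psi^{-1}(t) \thickapprox \Phi^{-1}(t)\, t^{-(\alpha+\beta)/n}$ collapses to $\Psi^{-1}(t) \thickapprox \Phi^{-1}(t)\, t^{-\beta/n}$.

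Next I would translate the operators under $\alpha = 0$: by the convention fixed in the introduction, $M_{0} \equiv M$ is the Hardy--Littlewood maximal operator, so $[b, M_\alpha]$ becomes $[b, M]$ and the localized fractional maximal $M_{\alpha, B_0}$ becomes the localized maximal $M_{B_0}$. The prefactor $|B|^{-\alpha/n}$ in condition (3) of Theorem \ref{dnndk} becomes $1$, so the test quantity $b(\cdot) - |B|^{-\alpha/n} M_{\alpha, B}(b)(\cdot)$ reduces precisely to $b(\cdot) - M_{B}(b)(\cdot)$, and condition (3) of the corollary is matched verbatim.

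With these identifications the three statements of the corollary coincide with those of Theorem \ref{dnndk} at $\alpha = 0$, and the equivalence $(1) \Leftrightarrow (2) \Leftrightarrow (3)$ is immediate. There is no real obstacle here: the genuine work --- the pointwise bound $|[b, M](f)(x)| \le M_{b}(f)(x)$ for nonnegative $b$, testing against $\chi_{B}$ via $M(\chi_{B})(x) = 1$ and $M(b\chi_{B})(x) = M_{B}(b)(x)$ on $B$, and the DeVore--Sharpley characterization of $\dot{\Lambda}_\beta(\Rn)$ --- has already been packaged into the proof of Theorem \ref{dnndk}.
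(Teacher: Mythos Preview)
Your proposal is correct and matches the paper's approach exactly: the paper derives this corollary simply by taking $\alpha=0$ in Theorem~\ref{dnndk}, and your verification that the hypotheses and the three statements all specialize correctly under $\alpha=0$ is an accurate (and slightly more detailed) rendering of that one-line deduction.
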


If we take $\Phi(t)=t^{p}$ and $\Psi(t)=t^{q}$ with $1\le p<\i$ and $1\le q \le\i$ at Theorem \ref{dnndk}, we have the following result.
\begin{corollary}\label{ZhLpCoxt}
Let $0<\beta<1$, $0\le \a<n$, $0< \a+\b<n$, $b\in  L^1_{\rm loc}(\Rn)$, $b$ be a locally integrable non-negative function, $1< p<q\le\i$ and $\frac{1}{p}-\frac{1}{q}=\frac{\a+\b}{n}$.
Then the following statements are equivalent:

$1.~$ $b\in \dot{\Lambda}_{\beta}(\Rn)$.

$2.~$ $[b,M_\alpha]$ is bounded from $L^{p}(\Rn)$ to $L^{q}(\Rn)$.

$3.~$ There exists a constant $C > 0$ such that
\begin{equation*}
\sup_{B} \frac{1}{|B|^{\beta/n}}\left(\frac{1}{|B|}\int_{B}|b(x)-|B|^{-\a/n}M_{\a,B}(b)(x)|^q dx\right)^{1/q}\le C.
\end{equation*}
\end{corollary}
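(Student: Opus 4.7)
The plan is to derive the corollary by directly specializing Theorem~\ref{dnndk} to the power Young functions $\Phi(t)=t^p$ and $\Psi(t)=t^q$, so the argument reduces to verifying that each hypothesis of Theorem~\ref{dnndk} is met and that condition~(3) there assumes the stated power form.

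First I would check that $\Phi(t)=t^p$ lies in $\mathcal{Y}\cap\nabla_2$ precisely when $1<p<\infty$, which is exactly the assumed range. The equivalence $\Psi^{-1}(t)\thickapprox\Phi^{-1}(t)\,t^{-(\alpha+\beta)/n}$ reduces to $t^{1/q}\thickapprox t^{1/p-(\alpha+\beta)/n}$, and this holds for all $t>0$ if and only if $1/p-1/q=(\alpha+\beta)/n$, which is exactly our hypothesis. Thus Theorem~\ref{dnndk} applies and delivers $(1)\Leftrightarrow(2)$ immediately.

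Next I would translate condition~(3) of Theorem~\ref{dnndk} into the concrete form appearing in the corollary. Using $\Psi^{-1}(|B|^{-1})=|B|^{-1/q}$ and $\|g\|_{L^{\Psi}(B)}=\bigl(\int_B|g|^q\,dx\bigr)^{1/q}$, the expression on the left of \eqref{maxbalfx} becomes
\[
\frac{1}{|B|^{\beta/n}}\left(\frac{1}{|B|}\int_B\bigl|b(x)-|B|^{-\alpha/n}M_{\alpha,B}(b)(x)\bigr|^q\,dx\right)^{1/q},
\]
which is condition~(3) of the corollary verbatim; hence $(1)\Leftrightarrow(3)$ as well.

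There is no substantive obstacle, since the corollary is a straightforward specialization. The only point that requires a moment of care is the endpoint $q=\infty$: here $\Psi$ is taken to be $0$ on $[0,1]$ and $\infty$ on $(1,\infty)$, with $\Psi^{-1}(t)\equiv 1$, which forces $p=n/(\alpha+\beta)$ in the scaling relation and makes the $L^{\Psi}$-norm collapse to the essential supremum on $B$; after this reinterpretation the three equivalences again read off from Theorem~\ref{dnndk}.
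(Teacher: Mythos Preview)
Your proposal is correct and follows exactly the paper's approach: the paper simply states that the corollary is obtained by taking $\Phi(t)=t^{p}$ and $\Psi(t)=t^{q}$ in Theorem~\ref{dnndk}, and you have filled in the routine verifications (that $t^p\in\mathcal{Y}\cap\nabla_2$ for $1<p<\infty$, that the scaling relation $\Psi^{-1}(t)\thickapprox\Phi^{-1}(t)t^{-(\alpha+\beta)/n}$ reduces to $1/p-1/q=(\alpha+\beta)/n$, and that condition~(3) rewrites in the displayed $L^q$ form). Your remark on the $q=\infty$ endpoint is a welcome clarification that the paper leaves implicit.
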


\begin{remark}
For $\a=0$, Corollary \ref{ZhLpCoxt} was proved in \cite{ZhArxiv}.
\end{remark}

\begin{remark}
From the proof of Theorem \ref{dnndk} one can see that the assumption $b\geq 0$ is not used in $(2)\Rightarrow (3)$ and $(3)\Rightarrow (1)$. This means (2) and (3) are sufficient conditions for $b\in \dot{\Lambda}_{\beta}(\Rn)$. But we don't know if (2) and (3) are necessary for $b\in \dot{\Lambda}_{\beta}(\Rn)$.
\end{remark}
Indeed, we have obtained the following result.
\begin{corollary}\label{corarx4.1}
Let $0<\beta<1$, $0\le \a<n$, $0< \a+\b<n$ and $b$ be a locally integrable function. Suppose that $\Phi, \Psi$ be Young functions, $\Phi\in\mathcal{Y}\cap \nabla_2$ and $\Psi^{-1}(t) \thickapprox \Phi^{-1}(t)t^{-\frac{\a+\b}{n}}$. If one of the following statements is true, then $b\in \dot{\Lambda}_{\beta}(\Rn)$:

$1.~$ $[b,M_{\a}]$ is bounded from $L^{\Phi}(\Rn)$ to $L^{\Psi}(\Rn)$.

$2.~$ There exists a constant $C > 0$ such that
\begin{equation*}
\sup_{B} |B|^{-\beta/n}\Psi^{-1}\big(|B|^{-1}\big)\|b(\cdot)-|B|^{-\a/n}M_{\a,B}(b)(\cdot)\|_{L^{\Psi}(B)}\le C.
\end{equation*}
\end{corollary}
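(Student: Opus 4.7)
The plan is to revisit the chain of implications in the proof of Theorem \ref{dnndk} and show that the only step genuinely using the hypothesis $b \ge 0$ is $(1) \Rightarrow (2)$, which was proved via the pointwise estimate $|[b,M_\a]f| \le M_{b,\a}f$ of \cite{ZhWu}. Both $(2) \Rightarrow (3)$ and $(3) \Rightarrow (1)$ go through for any locally integrable $b$, and this yields the corollary by establishing $1 \Rightarrow 2 \Rightarrow b \in \dot{\Lambda}_\b(\Rn)$.

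First I would derive statement $2$ of the corollary from statement $1$. For an arbitrary ball $B$ and every $x \in B$, the identities
$$
M_\a(\chi_B)(x) = |B|^{\a/n}, \qquad M_\a(b\chi_B)(x) = M_{\a,B}(b)(x)
$$
hold without any sign assumption on $b$, since $M_{\a,B}$ is defined with $|b|$ in the integrand. Repeating the computation \eqref{4.2arxiv} verbatim,
$$
|B|^{-\b/n}\Psi^{-1}(|B|^{-1}) \bigl\| b - |B|^{-\a/n}M_{\a,B}(b) \bigr\|_{L^\Psi(B)} = |B|^{-(\a+\b)/n}\Psi^{-1}(|B|^{-1}) \bigl\| [b, M_\a](\chi_B) \bigr\|_{L^\Psi(B)},
$$
and invoking the assumed $L^\Phi(\Rn) \to L^\Psi(\Rn)$ boundedness of $[b,M_\a]$ together with Lemma \ref{charorlc} and the equivalence $\Psi^{-1}(t) \approx \Phi^{-1}(t) t^{-(\a+\b)/n}$, one controls the right-hand side uniformly in $B$. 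This gives \eqref{maxbalfx}.

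Next I would obtain $b \in \dot{\Lambda}_\b(\Rn)$ from statement $2$. The inequality
$$
\frac{1}{|B|^{1+\b/n}} \int_B |b(x) - b_B|\, dx \le \frac{2}{|B|^{1+\b/n}} \int_B \bigl| b(x) - |B|^{-\a/n} M_{\a,B}(b)(x) \bigr|\, dx
$$
from \cite{ZhWu} is valid for any locally integrable $b$. Combined with Lemma \ref{lemHold} and the hypothesis \eqref{maxbalfx} it produces
$$
\frac{1}{|B|^{1+\b/n}} \int_B |b - b_B|\, dx \lesssim |B|^{-\b/n}\Psi^{-1}(|B|^{-1}) \bigl\| b - |B|^{-\a/n} M_{\a,B}(b) \bigr\|_{L^\Psi(B)} \le C,
$$
uniformly over balls $B$, and Lemma \ref{CharLipSp} then yields $b \in \dot{\Lambda}_\b(\Rn)$.

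The one point I would pause to verify carefully — and the only real obstacle — is that the inequality quoted from \cite{ZhWu} is indeed sign-free; this is exactly the observation noted in the remark preceding the corollary, so no new work is required. With that confirmed, chaining the two implications above produces the conclusion from either of the hypotheses $1$ or $2$, completing the proof.
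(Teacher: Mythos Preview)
Your proposal is correct and follows exactly the approach the paper intends: the corollary is stated immediately after a remark observing that the implications $(2)\Rightarrow(3)$ and $(3)\Rightarrow(1)$ in the proof of Theorem~\ref{dnndk} nowhere use $b\ge 0$, and you have simply unpacked that observation. No additional argument is needed beyond what you have written.
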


\begin{theorem}\label{ungo}
Let $b\geq 0$ be a locally integrable function, $0<\beta<1$, $0\le \a<n$, $0< \a+\b<n$ and $b\in \dot{\Lambda}_{\beta}(\Rn)$. Suppose that $\Phi, \Psi$ be Young functions, $\Phi\in\mathcal{Y}$ and condition \eqref{adFrCharOrl1M} holds. Then $[b,M_{\a}]$ is bounded from $L^{\Phi}(\Rn)$ to $WL^{\Psi}(\Rn)$.
\end{theorem}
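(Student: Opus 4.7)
The plan is to chain three ingredients that have all been prepared earlier in the paper, so the proof is a short composition rather than anything intricate.

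First, since $b\geq 0$ is locally integrable, I invoke the pointwise domination
$$
|[b,M_{\a}](f)(x)|\leq M_{b,\a}(f)(x),\qquad x\in\Rn,
$$
which is exactly \eqref{3.1gogmus} (taken from \cite{ZhWu}) and was already the opening step in the proof of Theorem \ref{dnndk}. The nonnegativity hypothesis on $b$ is essential precisely here; once this inequality is in hand the problem reduces to a bound for $M_{b,\a}$.

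Next, since $b\in\dot{\Lambda}_{\beta}(\Rn)$, I apply Lemma \ref{pwslip} to get the further pointwise estimate
$$
M_{b,\a}(f)(x)\lesssim \|b\|_{\dot{\Lambda}_{\beta}(\Rn)}\, M_{\a+\b}(f)(x).
$$
This reduces the problem to a weak-type bound for the fractional maximal operator of order $\a+\b$, which is legitimate because the hypothesis $0<\a+\b<n$ keeps us in the admissible range for $M_{\a+\b}$.

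Finally, condition \eqref{adFrCharOrl1M} is literally the condition \eqref{adRieszCharOrl2} of Theorem \ref{AdamsFrMaxCharOrl} with $\a$ replaced by $\a+\b$. Therefore the weak-type half of Theorem \ref{AdamsFrMaxCharOrl} applies and yields
$$
\|M_{\a+\b} f\|_{WL^{\Psi}}\lesssim \|f\|_{L^{\Phi}}.
$$
Stringing the three estimates together gives
$$
\|[b,M_{\a}] f\|_{WL^{\Psi}}\lesssim \|M_{b,\a} f\|_{WL^{\Psi}}\lesssim \|b\|_{\dot{\Lambda}_{\beta}(\Rn)}\|M_{\a+\b} f\|_{WL^{\Psi}}\lesssim \|b\|_{\dot{\Lambda}_{\beta}(\Rn)}\|f\|_{L^{\Phi}},
$$
which is the desired conclusion. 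I do not expect any real obstacle here: every step is a direct invocation of a previously established result, and the only delicate point is bookkeeping, namely checking that \eqref{adFrCharOrl1M} is exactly \eqref{adRieszCharOrl2} at the shifted index $\a+\b$, and that the nonnegativity of $b$ is needed only to activate the pointwise domination in the first step (note that the strong-type analogue Theorem \ref{CommFrMaxCharOr01}(1) does not need $b\ge 0$, but for the weak-type statement via $[b,M_\a]$ we do use it through \eqref{3.1gogmus}).
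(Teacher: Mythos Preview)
Your proof is correct and follows essentially the same route as the paper: the paper's one-line proof cites \eqref{3.1gogmus} and Theorem \ref{CommFrMaxCharOr01W}(1), and the latter is itself proved by combining Lemma \ref{pwslip} with Theorem \ref{AdamsFrMaxCharOrl}, which is exactly your chain of three steps unrolled. Your bookkeeping checks (that \eqref{adFrCharOrl1M} is \eqref{adRieszCharOrl2} at the shifted index $\a+\b$, and that $b\ge 0$ is used only to activate \eqref{3.1gogmus}) are accurate.
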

\begin{proof}
Obviously, it follows from \eqref{3.1gogmus} and Theorem \ref{CommFrMaxCharOr01W}.
\end{proof}

If we take $\Phi(t)=t^{p}$ and $\Psi(t)=t^{q}$ with $1\le p<\i$ and $1\le q \le\i$ at Theorem \ref{ungo}, we have the following result.

\begin{corollary}\label{ZhLpCoWxt}
Let $b\geq 0$ be a locally integrable function, $0<\beta<1$, $0\le \a<n$, $0< \a+\b<n$, $b\in \dot{\Lambda}_{\beta}(\Rn)$, $1\le p<q\le\i$ and $\frac{1}{p}-\frac{1}{q}=\frac{\a+\b}{n}$. Then $[b,M_{\a}]$ is bounded from $L^{p}(\Rn)$ to $WL^{q}(\Rn)$.
\end{corollary}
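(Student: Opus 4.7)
The plan is to deduce this theorem as a direct corollary of two ingredients that have already been set up in the paper. The first ingredient is the pointwise domination
$$
|[b,M_\alpha](f)(x)|\leq M_{b,\a}(f)(x), \qquad x\in\Rn,
$$
which is recorded as \eqref{3.1gogmus} and holds under the sole hypothesis that $b$ is a non-negative locally integrable function. The second ingredient is part (1) of Theorem \ref{CommFrMaxCharOr01W}, which states that under exactly the standing hypotheses here (Young functions $\Phi,\Psi$ with $\Phi\in\mathcal{Y}$, condition \eqref{adFrCharOrl1M}, and $b\in\dot\Lambda_\beta(\Rn)$) the fractional maximal commutator $M_{b,\a}$ is bounded from $L^{\Phi}(\Rn)$ to $WL^{\Psi}(\Rn)$.

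Given these two ingredients, I would argue as follows. Since $b\ge 0$ is locally integrable, \eqref{3.1gogmus} applies pointwise and so for every $\lambda>0$ the distribution function satisfies
$$
m\bigl([b,M_\a]f,\lambda\bigr)\le m\bigl(M_{b,\a}f,\lambda\bigr).
$$
From this and the definition of the weak Orlicz norm via $\|g\|_{WL^{\Psi}}=\inf\{\lambda>0:\sup_{t>0}\Psi(t)\,m(g/\lambda,t)\le 1\}$, I immediately obtain $\|[b,M_\a]f\|_{WL^{\Psi}}\le\|M_{b,\a}f\|_{WL^{\Psi}}$. Applying Theorem \ref{CommFrMaxCharOr01W}(1) to the right-hand side finishes the proof:
$$
\|[b,M_\a]f\|_{WL^{\Psi}}\le\|M_{b,\a}f\|_{WL^{\Psi}}\lesssim \|b\|_{\dot{\Lambda}_{\beta}(\Rn)}\|f\|_{L^{\Phi}}.
$$

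There is really no genuine obstacle here; both halves of the argument have been prepared earlier in the paper. The only thing worth noting is that the non-negativity assumption on $b$ is essential for invoking \eqref{3.1gogmus}, so it cannot be dropped at this stage — whereas in Theorem \ref{CommFrMaxCharOr01W} itself the sign of $b$ plays no role since the commutator $M_{b,\a}$ already involves $|b(x)-b(y)|$.
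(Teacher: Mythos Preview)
Your argument is exactly the paper's: the paper packages the combination of \eqref{3.1gogmus} with Theorem~\ref{CommFrMaxCharOr01W}(1) as Theorem~\ref{ungo} (stated for general Young functions), and then derives the present corollary by specializing $\Phi(t)=t^{p}$, $\Psi(t)=t^{q}$.

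The only gap in your write-up is that you never make this specialization explicit. You refer throughout to ``Young functions $\Phi,\Psi$ with $\Phi\in\mathcal{Y}$, condition \eqref{adFrCharOrl1M}'', but the corollary as stated is about $L^{p}$ and $WL^{q}$, so you should record that $\Phi(t)=t^{p}\in\mathcal{Y}$ for $1\le p<\infty$ and that the relation $\tfrac{1}{p}-\tfrac{1}{q}=\tfrac{\alpha+\beta}{n}$ is precisely condition \eqref{adFrCharOrl1M} in this instance (indeed $t^{-(\alpha+\beta)/n}\Phi^{-1}(t)=t^{1/p-(\alpha+\beta)/n}=t^{1/q}=\Psi^{-1}(t)$). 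With that one-line verification added, your proof is complete and matches the paper's.
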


\begin{remark}
For $\a=0$, Corollary \ref{ZhLpCoWxt} was proved in \cite{ZhArxiv}.
\end{remark}

\end{document}